\documentclass[a4paper,11pt,reqno]{amsart}

\textwidth=16cm \textheight=24.5cm
\oddsidemargin=0cm \evensidemargin=0cm
\topmargin=-0.7cm

\usepackage{amsmath}
\usepackage{amsthm}
\usepackage{amssymb}
\usepackage{latexsym}
\usepackage{graphicx,color}
\usepackage{booktabs}

\numberwithin{equation}{section}

\newtheorem{thm}{Theorem}[section]
\newtheorem{prop}[thm]{Proposition}
\newtheorem{lem}[thm]{Lemma}
\newtheorem{cor}[thm]{Corollary}

\theoremstyle{definition}
\newtheorem{defn}[thm]{Definition}

\theoremstyle{remark}
\newtheorem{rem}[thm]{Remark}

\setcounter{tocdepth}{2}
\allowdisplaybreaks[4]

\newcommand{\e}{\varepsilon}

\newcommand{\la}{\lambda}

\newcommand{\sgm}{\sigma}
\renewcommand{\th}{\theta}
\newcommand{\om}{\omega}

\newcommand{\p}{\partial}
\newcommand{\I}{\infty}
\newcommand{\Sc}[1]{\mathcal{#1}}
\newcommand{\F}{\Sc{F}}

\newcommand{\Bo}[1]{\mathbb{#1}}
\newcommand{\R}{\Bo{R}}
\newcommand{\T}{\Bo{T}}

\newcommand{\lec}{\lesssim}
\newcommand{\gec}{\gtrsim}

\newcommand{\bbar}{\overline}
\newcommand{\ti}{\widetilde}

\newcommand{\shugo}[1]{\{ #1\}}
\newcommand{\Shugo}[2]{\big\{ \, #1 : #2 \, \big\}}

\newcommand{\LR}[1]{{\langle #1 \rangle }}
\newcommand{\chf}[1]{\textbf{1}_{#1}}

\newcommand{\dyadic}{2^{\Bo{N}_0}}

\newcommand{\norm}[2]{\big\| #1 \big\| _{#2}}
\newcommand{\tnorm}[2]{\| #1 \| _{#2}}

\newcommand{\eq}[2]{\begin{equation} \label{#1} \begin{split} #2 \end{split} \end{equation}}
\newcommand{\eqq}[1]{\begin{equation*} \begin{split} #1 \end{split} \end{equation*}}
\newcommand{\eqs}[1]{\begin{gather*} #1 \end{gather*}}
\newcommand{\mat}[1]{\begin{smallmatrix} #1 \end{smallmatrix}}

\newcommand{\hx}{\hspace{10pt}}

\newcommand{\hz}{\!\!\!}

\title[Unconditional LWP for periodic NLS]{Unconditional local well-posedness for periodic NLS}
\author[N. Kishimoto]{Nobu Kishimoto}
\address{Research Institute for Mathematical Sciences, Kyoto University, Kyoto 606-8502, Japan}
\email{nobu@kurims.kyoto-u.ac.jp}

\begin{document}

\begin{abstract}
The nonlinear Schr\"odinger equations with nonlinearities $|u|^{2k}u$ on the $d$-dimensional torus are considered for arbitrary positive integers $k$ and $d$.
The solution of the Cauchy problem is shown to be unique in the class $C_tH^s_x$ for a certain range of scale-subcritical regularities $s$, which is almost optimal in the case $d\ge 4$ or $k\ge 2$.
The proof is based on various multilinear estimates and the infinite normal form reduction argument.
\end{abstract}

\maketitle

\section{Introduction}

We consider the Cauchy problem for nonlinear Schr\"odinger equations with periodic boundary condition:
\begin{equation}\label{NLS}
\left\{
\begin{array}{@{\,}r@{\;}l@{\qquad}r@{\;}l}
i\p _tu+\Delta u&=\la |u|^{2k}u,&(t,x)&\in [0,T]\times \T ^d,\quad d,k\in \Bo{N},\quad \la \in \Bo{C},\\
u(0,x)&=u_0(x),&x&\in \T ^d,
\end{array}
\right. 
\end{equation}
where $\T ^d:=\R ^d/2\pi \Bo{Z}^d$ is the $d$-dimensional torus.
The purpose of this article is to show unconditional (local) well-posedness of \eqref{NLS} in low-regularity Sobolev spaces $H^s(\T^d)$ for general dimensions $d$ and degrees of nonlinearity $2k+1$ by means of an abstract theory given in \cite{K-all} based on the normal form reduction technique.
Here, ``unconditional'' means that uniqueness of the solution in the sense of distribution holds in the entire space $C([0,T];H^s)$.
We distinguish it from ``conditional'' well-posedness, for which uniqueness is ensured in a subset of $C([0,T];H^s)$ or under additional assumptions, depending on how the solution is constructed.
For instance, a standard iteration argument with Sobolev inequalities shows that \eqref{NLS} is unconditionally locally well-posed in $H^s(\T ^d)$ for $s>\frac{d}{2}$, while the Fourier restriction norm method (or Bourgain's method, see \cite{B93-1}) may yield conditional local well-posedness for lower regularities, in which case uniqueness of solutions would be shown only in Bourgain spaces. 

The (conditional) local well-posedness of \eqref{NLS} on the torus, along with underlying periodic Strichartz estimates of the form
\eqq{\norm{P_{\le N}e^{it\Delta}\phi}{L^p_{t,x}(I\times \T ^d)}\lec _{d,p,I}N^{\theta (d,p)}\tnorm{\phi}{L^2(\T ^d)},}
has been quite extensively studied since the pioneering work of Bourgain~\cite{B93-1}.
In \cite{B93-1} the Cauchy problem \eqref{NLS} on standard square (or rational) tori was treated and its local and global well-posedness in $H^s(\T ^d)$ was established already for a wide range of $d$, $k$ and scale-subcritical regularities $s$.
Accordingly, a major interest nowadays has been drawn by the problems at the scale-critical regularities and posed on general irrational tori; see \cite{B07,B13,BD15,CW10,D14p,D17,GOW14,HTT11,HTT12,KV16,L19,S14,W13}, for instance.
For \eqref{NLS} posed on the square torus, local well-posedness in $H^s$ is known to hold for any $d,k\in \Bo{N}$ and any subcritical/critical regularities $s\ge s_c$,
\eqq{s_c:=\frac{d}{2}-\frac{1}{k},}
with the exception of 1d cubic case $d=k=1$ (where the Cauchy problem is globally well-posed in $L^2$ but ill-posed in any Sobolev space of negative index; see \cite{B93-1,CCT03p,M09,GO18}) and $L^2$-critical cases $(d,k)=(1,2),(2,1)$ (where well-posedness in the critical space $H^{s_c}=L^2$ is open; see \cite{K14} for a partial result).

Concerning unconditional well-posedness, there are two natural thresholds: 
One is $s\ge s_c$ coming from the scaling, and the other is $s\ge s_e$,
\eqq{s_e:=\frac{d(2k-1)}{2(2k+1)},}
which is needed for the embedding $H^s\hookrightarrow L^{2k+1}$ so that the nonlinearity $|u|^{2k}u$ makes sense within the framework of distribution.
Therefore, the natural conjecture is that \eqref{NLS} is unconditionally well-posed in $H^s(\Bo{T}^d)$ for
\eq{conjecture}{s\ge \max \shugo{s_c,\,s_e}=\max \shugo{\tfrac{d}{2}-\tfrac{1}{k},\,\tfrac{d}{2}\tfrac{2k-1}{2k+1}}.}
We see that $s_c=s_e+\frac{1}{2k+1}(d-2-\frac{1}{k})$. 
In particular, $s_c\ge s_e$ if and only if $d\ge 2+\frac{1}{k}$, and
\eqq{
\begin{cases}
s_c<s_e &\text{if $d=1,2$,}\\
s_c=s_e &\text{if $d=3$ and $k=1$,}\\
s_c>s_e &\text{if $d=3$ and $k\ge 2$, or $d\ge 4$.}
\end{cases}
}

Unconditional well-posedness for nonlinear Schr\"odinger equations was investigated first by Kato~\cite{K95} and has been well studied in the non-periodic case, while much less is known in the periodic case.
Guo, Kwon, and Oh \cite{GKO13} proved unconditional uniqueness of the solution for \eqref{NLS} with $d=k=1$ in $H^s(\T )$ under natural regularity constraint $s\ge \frac{1}{6}=\max \shugo{s_c,\,s_e}$ via the technique of (Poincar\'e-Dulac) normal form reduction.
Chen and Holmer~\cite{CH19} and Herr and Sohinger~\cite{HS19} obtained uniqueness results on \eqref{NLS} with $\la =\pm 1$ from the analysis of the Gross-Pitaevskii hierarchy, for quintic (defocusing) NLS on the 3d square torus at the critical regularity $s=s_c=1$ and for cubic NLS on arbitrary (irrational) tori in dimension two and higher with regularities in a certain subcritical range, respectively.
(We will come back to these results in Remark~\ref{rem:CHandHS} later.)
Recently, the author~\cite{K-all} abstracted the methodology introduced in \cite{GKO13} and proved unconditional uniqueness for \eqref{NLS} on the 2d square torus in $H^{2/3}$ as an application of the abstract theory.

In this article, we extend the result on unconditional uniqueness for \eqref{NLS} to general higher-dimensional/higher-degree cases by applying the abstract result in \cite{K-all} again, but with more refined analysis.
The main result reads as follows:
\begin{thm}\label{thm:uniqueness}
Let $d,k\in \Bo{N}$ with $(d,k)\neq (1,1)$.
Assume that $s$ satisfies
\eq{cond:thm}{
\begin{cases}
~s>\frac{3d-2}{10}=\left\{ \begin{split} ~&\tfrac{2}{5} \\[3pt] &\tfrac{7}{10} \end{split} \right. \quad &\text{if $k=1$ and}~\left\{ \begin{split} &d=2, \\[3pt] &d=3, \end{split} \right. \\[10pt]
~\text{$s>s_c$ and $s\ge s_e$} &\text{if $k\ge 2$ or $d\ge 4$.}
\end{cases}
}
Then, \eqref{NLS} is unconditionally locally well-posed in $H^s(\Bo{T}^d)$.
\end{thm}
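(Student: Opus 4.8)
The plan is to invoke the abstract unconditional well-posedness theorem of \cite{K-all}; existence in the stated subcritical range is classical (\cite{B93-1}), so the substance is the unconditional uniqueness, and the abstract criterion reduces it to verifying a finite list of multilinear estimates for the nonlinearity $F(u)=\la|u|^{2k}u=\la u^{k+1}\bbar u^{\,k}$ and for the operators generated from it by the normal form reduction. First I would pass to the interaction representation $v(t):=e^{-it\Delta}u(t)$, under which \eqref{NLS} becomes, on the Fourier side,
\[
\p_t\hhat v(t,n)=-i\la\sum_{n_1-n_2+\cdots+n_{2k+1}=n}e^{it\Phi}\,\hhat v(t,n_1)\bbar{\hhat v(t,n_2)}\cdots\hhat v(t,n_{2k+1}),
\]
where the modulation is $\Phi=\Phi(n,n_1,\dots,n_{2k+1})=|n|^2-|n_1|^2+|n_2|^2-\cdots-|n_{2k+1}|^2$ and the odd-indexed factors carry $\hhat v$, the even-indexed ones carry $\bbar{\hhat v}$.

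Next comes the normal form reduction. I would split the multilinear operator $\Sc N$ on the right as $\Sc N=\Sc N_{\mathrm{res}}+\Sc N_{\mathrm{nr}}$ according to whether $|\Phi|$ is small or large relative to a threshold built from the two largest among $\LR{n},\LR{n_1},\dots,\LR{n_{2k+1}}$. On the non-resonant piece one writes $e^{it\Phi}=(i\Phi)^{-1}\p_te^{it\Phi}$ and integrates by parts in time; this gains a factor $\Phi^{-1}$ and produces, first, boundary contributions at $t=0$ and $t$, and, second, a new $(4k+1)$-linear term obtained by substituting $\p_tv$ from the equation itself. Iterating this indefinitely and keeping track of the resulting ordered-tree expansion is precisely what the abstract machinery of \cite{K-all} automates; it leaves one to establish, with a gain that is summable over the tree expansion (in practice a small positive power of the largest frequency), (a) a bound for $\Sc N_{\mathrm{res}}$, (b) bounds for the boundary-term operators, and (c) bounds for the substituted non-resonant operators of every multiplicity, all mapping products of $C([0,T];H^s)$ functions into $C([0,T];H^s)$ or $L^1_tH^s$ depending on where the term sits.

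To verify (a)--(c) I would decompose dyadically in frequency and in modulation and run a case analysis on the relative sizes of the pieces, the high-high-low interactions being the delicate ones. The analytic inputs are the periodic Strichartz/Sobolev estimates $\norm{P_{\le N}e^{it\Delta}\phi}{L^p_{t,x}(I\times\T^d)}\lesssim N^{\theta(d,p)}\tnorm{\phi}{L^2}$ together with their bilinear refinements, which control the number of lattice points on the level sets of $\Phi$; the divisor bound, which controls $\#\{(n_1,\dots,n_{2k+1}):\,n_1-n_2+\cdots+n_{2k+1}=n,\ \Phi=m\}$; and repeated Hölder and Cauchy--Schwarz. On the non-resonant region one trades the $\Phi^{-1}$ gain against the $H^s$-derivative loss placed on the output; on the resonant region one exploits the smallness of the set where $|\Phi|$ is small, implemented through the bilinear counting estimates. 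Summing the dyadic pieces then yields the required multilinear bounds with the power gain demanded by the abstract theorem.

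The main obstacle is pushing these multilinear estimates down to the claimed regularity thresholds. For $k\ge2$ or $d\ge4$ one must reach every $s>s_c$ with $s\ge s_e$; here the lattice-point count over the $2k+1$ frequencies (and, after substitutions, over more of them) is costly and has to be balanced exactly against the modulation gain and against the sharpest periodic Strichartz exponents available in dimension $d$, and the resonant contributions---where $\Phi$ nearly vanishes while several frequencies are comparably large---form the technically heaviest part and require care with the geometry of the resonance variety. For $k=1$ and $d=2,3$ the method is genuinely lossy: the threshold $s>\tfrac{3d-2}{10}$ sits strictly above $\max\{s_c,s_e\}$ because the worst nearly-resonant high-high-low bilinear interaction cannot be absorbed by the available bilinear Strichartz gain, so one settles for the best exponent for which the estimates still close.
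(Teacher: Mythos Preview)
Your overall architecture is right---reduce to unconditional uniqueness, pass to the interaction picture, and invoke the abstract criterion of \cite{K-all}---but two concrete points diverge from what the paper actually does, and at least the first is a genuine gap.

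First, your resonant/non-resonant splitting is the wrong one for this application. You propose to cut according to whether $|\Phi|$ is small or large relative to a threshold in the top two frequencies; that is the Guo--Kwon--Oh style decomposition, and it is what the machinery of \cite{K-all} performs \emph{internally}. What the user of \cite[Theorem~1.1]{K-all} must supply is a set $\Sc{A}$ of ``exceptional'' frequency configurations (Definition~\ref{defn:A} in the paper) together with the fixed list of estimates (R), (B1), (B1)$'$, (B2), (B2)$'$, (B3) of Proposition~\ref{prop:fundamental}. The paper's $\Sc{A}$ is \emph{not} defined by the size of $\Phi$: it is defined by coincidences among the frequencies (e.g.\ $n_2=n_1$ or $n_2=n_3$ when $k=1$, $d=2,3$; or $n_{[1]}=n_{[2]}$, $n_{[2]}=n_{[3]}$, etc.\ when $k\ge 2$, $d\le 2$). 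This choice is what makes the crucial $\ell^\infty$ estimate of Lemma~\ref{lem:trilinear2} go through, because on $\Sc{A}^c$ one can invoke the lattice-point bounds \eqref{est-1'-1}--\eqref{est-d'-2} that genuinely require the non-coincidence hypotheses $n_2\neq n_1$, $n_2\neq n_3$. A splitting by $|\Phi|$ alone would not give you access to these estimates, and the argument would not close at the stated thresholds.

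Second, the analytic inputs are not Strichartz estimates. The paper is explicit that it uses only the ``classical'' divisor-counting route (Lemmas~\ref{lem:numbertheory}--\ref{lem:number2}), with no space-time Strichartz input; indeed the needed bounds (B1), (B2) are uniform over the level sets $\{\Phi=\mu\}$ and are purely spatial $\ell^2_s$ or $\ell^r_\sigma$ estimates, so periodic Strichartz inequalities do not enter. Your sketch also omits the mechanism that produces the specific threshold $s>\tfrac{3d-2}{10}$ for $k=1$, $d=2,3$: it comes from interpolating the $\ell^2_{-s_c}$ estimate of Corollary~\ref{cor:trilinear} against the $\ell^\infty$ estimate of Lemma~\ref{lem:trilinear2} (which only reaches $s>\tfrac{d-1}{2}$ on its own), choosing $\theta=\tfrac{2d-3}{5}$ to balance $s_e+\tfrac{\theta}{3}$ against $\tfrac{d}{2}-\tfrac{1+\theta}{2}$. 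Without that interpolation step the best you would get from the $\ell^\infty$ route alone is $s>\tfrac12$ (for $d=2$) or $s>1$ (for $d=3$), strictly worse than the claim.
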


\begin{rem}
(i) Note that the lower bound \eqref{cond:thm} in the case $k=1$, $d=2,3$ satisfies
\eqq{\tfrac{d}{2}>\tfrac{3d-2}{10}>\tfrac{d}{6}=\max \shugo{s_c,s_e}.}
For any other cases, we have almost optimal results in view of \eqref{conjecture}.
In particular, if the endpoint is subcritical with respect to scaling (\mbox{i.e.} $s_e>s_c$), unconditional local well-posedness holds also at the endpoint, unless $d=2$ and $k=1$.
On the other hand, the abstract theory in \cite{K-all} is basically not prepared for application to the scale-critical problem.
As a result, in some cases (e.g., $d\ge 4$ and any $k$) conditional well-posedness is already known at the endpoint $s=s_c=\max \shugo{s_c,s_e}$ but unconditional uniqueness is left open.

(ii) Since we do not utilize Hamiltonian structure or conservation laws of the equation, the result holds equally for any $\la \in \Bo{C}$.
Moreover, our method can be applied to equations with non-gauge-invariant power-type nonlinearities as well.

(iii) In the proof, we only use a ``classical'' argument based on divisor counting, so that our result is restricted to the case of square (or rational) tori.
It would be of interest to investigate the problem on general irrational tori by adapting ``modern'' techniques from \cite{BD15} to the abstract theory in \cite{K-all}. 
\end{rem}

\begin{rem}\label{rem:CHandHS}
We show uniqueness among all distributional solutions in $C([0,T];H^s)$ without assuming any other property.
This should be compared with the results in \cite{CH19,HS19}, where  uniqueness was shown for the solution \emph{satisfying certain conservation laws}. 
In fact, conservation of the energy and of the $L^2$-norm played an essential role in the argument in \cite{CH19} and in \cite{HS19}, respectively.

On one hand, for the cubic NLS one can show the $L^2$ conservation law for any distributional solution in $C([0,T];H^{d/4})$.
To see this, let $\lambda =1$ for simplicity and consider the approximating sequence $u_N:=P_{\le N}u$ of such a solution $u$.
Here, $P_{\le N}u:=\F ^{-1}_n\chf{\{ |n|\le N\}}(\F _xu)(t,n)$, and $\F _x$, $\F^{-1}_n$ denote the Fourier and inverse Fourier transforms on $\T ^d$ and $\Bo{Z}^d$, respectively.
The following identity is verified by the equation for $u_N$ (which is smooth): 
\eqq{\tnorm{u_N(t)}{L^2}^2=\tnorm{u_N(0)}{L^2}^2+2\Im \int _0^t\int _{\T ^d}\Big( P_{\le N}(|u|^2u)-|u_N|^2u_N\Big) \bbar{u_N}\,dx\,dt ,\qquad t\in [0,T].}
If $u\in C([0,T];H^{d/4})\hookrightarrow C([0,T];L^4)$, the integral on the right-hand side vanishes as $N\to \I$, which shows that the $L^2$-norm of $u$ is constant in time.
Recall that in \cite{HS19} uniqueness was claimed in $H^s(\T ^d)$ for $s>\frac{7}{12}~(\ge \frac{1}{2})$ if $d=2$; $s>\frac{4}{5}~(\ge \frac{3}{4})$ if $d=3$; $s>\frac{d}{2}-1~(\ge \frac{d}{4})$ if $d\ge 4$. 
Therefore, the uniqueness result in \cite{HS19} actually yields unconditional uniqueness in the same regularity range.
In particular, Theorem~\ref{thm:uniqueness} is covered by this result in the case $k=1$, $\la \in \R$, and for $s$ within the above range.

On the other hand, a similar regularization argument would not show the energy conservation of the 3d quintic NLS for general solutions in $C([0,T];H^1)$.
In fact, we have
\eqq{\frac{1}{2}\tnorm{\nabla u_N(t)}{L^2}^2+\frac{1}{6}\tnorm{u_N(t)}{L^6}^6&=\frac{1}{2}\tnorm{\nabla u_N(0)}{L^2}^2+\frac{1}{6}\tnorm{u_N(0)}{L^6}^6\\
&\quad +\Im \int _0^t \int _{\T ^3}\nabla \Big( P_{\le N}(|u|^4u)-|u_N|^4u_N\Big) \cdot \nabla \bbar{u_N}\,dx\,dt\\
&\quad +\Im \int _0^t \int _{\T ^3}\Big( P_{\le N}(|u|^4u)-|u_N|^4u_N\Big) |u_N|^4\bbar{u_N}\,dx\,dt.}
It seems that the treatment of the first (resp.~the second) integral on the right-hand side requires the regularity at least $H^{4/3}$ (resp.~$H^{6/5}$).
Hence, it is not clear whether the uniqueness result in \cite{CH19} implies unconditional uniqueness (without assuming conservation of the energy) for the 3d quintic NLS at the critical regularity $H^1$.
It shows unconditional uniqueness in the class $C([0,T];H^{4/3})$, however.
\end{rem}

Since local well-posedness of \eqref{NLS} has been obtained in the whole subcritical range of regularities, to prove Theorem~\ref{thm:uniqueness} we only have to show unconditional uniqueness.
Following the argument in \cite{K-all}, we first move to the equation on the frequency side.
Let $u(t)\in C([0,T];H^{s_e}(\T ^d))$ be a solution (in the sense of distribution) of \eqref{NLS} and $\om (t,n):=\F _x[e^{-it\Delta}u(t)](n)$, then $\om$ satisfies
\eq{eq_v1}{\p _t\om (t,n)=c\lambda \hspace{-10pt}\sum _{\mat{n_1,n_2,\dots ,n_{2k+1}\in \Bo{Z}^d\\ n=n_1-n_2+\dots -n_{2k}+n_{2k+1}}}\hspace{-10pt}e^{it\Phi}\om (t,n_1)\bbar{\om (t,n_2)}\cdots \bbar{\om (t,n_{2k})}\om (t,n_{2k+1}),\qquad n\in \Bo{Z}^d,}
where $c$ is a constant depending on the definition of the Fourier transform and
\eqq{\Phi =\Phi (n,n_1,n_2,\dots ,n_{2k+1}):=&\,|n|^2-|n_1|^2+|n_2|^2-\dots -|n_{2k+1}|^2.}
Note that the sum in \eqref{eq_v1} is absolutely convergent for each $n$, since $\F ^{-1}_n|\om (t)|\in H^{s_e}\subset L^{2k+1}$.
In particular, $\om (\cdot ,n)\in C^1([0,T])$ for each $n\in \Bo{Z}^d$ and \eqref{eq_v1} holds in the classical sense. 

We next separate some terms from the nonlinear part.
This step was not taken in \cite[Section~3]{K-all} for the sake of simplicity, while we do in order to obtain uniqueness in lower regularities.
\begin{defn}\label{defn:A}
Let $d,k\in \Bo{N}$ with $(d,k)\neq (1,1)$.
We define the set $\Sc{A}\subset (\Bo{Z}^d)^{2k+1}$ as follows.
\begin{itemize}
\item If $d\ge 2+\frac{2}{k}$ (i.e., $d\ge 4$, or $d=3$ and $k\ge 2$), then $\Sc{A}:=\emptyset$.
\item For $k=1$ and $d=2,3$, we define
\eqq{\Sc{A}:=\Shugo{(n_1,n_2,n_3)\in (\Bo{Z}^d)^{3}}{\text{$n_2=n_1$ or $n_2=n_3$}}.}
\item For $k\ge 2$ and $d=1,2$, we first fix a linear order $\succeq$ on $\Bo{Z}^d$ such that $n_1\succeq n_2$ implies $|n_1|\ge |n_2|$. 
For instance, we may define it as
\eqq{n_1\succeq n_2\quad \Leftrightarrow \quad 
\left\{ \begin{split}
~&\text{$|n_1|>|n_2|$, or}\\[-5pt]
&\text{$|n_1|=|n_2|$ and $n_1\ge n_2$ in the lexicographic order on $\Bo{Z}^d$.}
\end{split}\right.
}
Given $(n_l)_{l=1}^{2k+1}\in (\Bo{Z}^d)^{2k+1}$ and $m\in \{ 1,2,\dots ,2k+1\}$, we write $n_{[m]}$ to denote the $m$-th largest one in the order $\succeq$ among $\shugo{n_l}_{l=1}^{2k+1}$.
Then, we define
\eqq{\Sc{A}:=\begin{cases}
\Sc{A}_1\cup \Sc{A}_2 &\text{if $k\ge 3$ or $d=1$},\\
\Sc{A}_1\cup \Sc{A}_2\cup \Sc{A}_3 &\text{if $k=2$ and $d=2$},
\end{cases}}
where
\eqq{\Sc{A}_1&:=\Shugo{(n_l)_{l=1}^{2k+1}}{\text{$n_{[1]}=n_{[2]}$}},\\
\Sc{A}_2&:=\Shugo{(n_l)_{l=1}^{2k+1}}{\text{$n_{[2]}=n_{[3]}$}},\\
\Sc{A}_3&:=\Shugo{(n_l)_{l=1}^{2k+1}}{\text{$n_{[3]}=n_{[4]}$ and $\LR{n_{[2]}}\le \LR{n_{[3]}}^{3/2}$}},\quad \LR{n}:=(1+|n|^2)^{1/2}.}
\end{itemize}
\end{defn}

The set $\Sc{A}$ consists of the frequencies which we separate from the principal nonlinear part.
Namely, we rewrite the equation \eqref{eq_v1} as
\eqq{\p _t\om (t,n)=c\la \hspace{-10pt}\sum _{\mat{n_1,n_2,\dots ,n_{2k+1}\in \Bo{Z}^d\\ n=n_1-n_2+\dots +n_{2k+1}}}\hspace{-10pt}\chf{\Sc{A}^c}e^{it\Phi}\om(t,n_1)\bbar{\om(t,n_2)}\cdots \om(t,n_{2k+1})+\Sc{R}[\om (t)](n),\quad n\in \Bo{Z}^d,}
where
\eqq{\Sc{R}[\om (t)](n):=c\la \sum _{\mat{n_1,n_2,\dots ,n_{2k+1}\in \Bo{Z}^d\\ n=n_1-n_2+\dots +n_{2k+1}}}\chf{\Sc{A}}e^{it\Phi}\om(t,n_1)\bbar{\om(t,n_2)}\cdots \om(t,n_{2k+1}).}
We use the notation of weighted sequential $L^p$-norms $\tnorm{\om}{\ell ^p_s}:=\tnorm{\LR{\cdot}^s\om (\cdot )}{\ell ^p(\Bo{Z}^n)}$ for $p\in [1,\I ]$ and $s\in \R$.
The main ingredient of the proof is to show the following multilinear estimates:
\begin{prop}\label{prop:fundamental}
Let $d,k$ be positive integers such that $(d,k)\neq (1,1)$.
The following holds.
\begin{enumerate}
\item For any $s_1>s_c~(\ge 0)$ and $s_2>d/2$, we have
\eqq{
\text{(B1)}\qquad &\sup _{\mu \in \Bo{Z}}\norm{\sum _{\mat{n_1,n_2,\dots ,n_{2k+1}\in \Bo{Z}^d\\ n=n_1-n_2+\dots +n_{2k+1}}}\chf{\{ \Phi =\mu \}}\prod _{l=1}^{2k+1}\om _l(n_l)}{\ell ^2_{s_1}(\Bo{Z}^d_n)}\lec\prod _{l=1}^{2k+1}\tnorm{\om _l}{\ell ^2_{s_1}},\\
\text{(B1)'}\qquad &\norm{\sum _{\mat{n_1,n_2,\dots ,n_{2k+1}\in \Bo{Z}^d\\ n=n_1-n_2+\dots +n_{2k+1}}}\prod _{l=1}^{2k+1}\om _l(n_l)}{\ell ^2_{s_2}(\Bo{Z}^d_n)}\lec \prod _{l=1}^{2k+1}\tnorm{\om _l}{\ell ^2_{s_2}}.
}
\item  Let $s>s_c$ satisfy the condition \eqref{cond:thm}. 
Define $s_2:=\max \{ \frac{d}{2},s\} +1$, and $s_1\in (s_c,s)$, $r\in [2,\I ]$, $\sgm \in [-s,0]$ by
\eqq{
[s_1,r,\sgm ]:=\begin{cases}
\big[ \frac{s+s_c}{2},\,2,\,-s_c\big] &\text{if $d\ge 2+\frac{2}{k}$,}\\[5pt]
\big[ \max \{ \frac{s+s_c}{2},s_e-\frac{1}{2}\e (k)\} ,\,\I ,\,0\big] &\text{if $d=1,2$ and $k\ge 2$,}\\[5pt]
\big[ \frac{1}{2}(s+\frac{3d-2}{10}),\,\frac{10}{2d-3},\,-\frac{(2d-3)(d-2)}{10}\big] &\text{if $d=2,3$ and $k=1$,}
\end{cases}
}
where $\e (k)$ is a positive constant given in Lemma~\ref{lem:trilinear2} below.
Then, it holds that
\eqq{
\text{(R)}\qquad &\norm{\sum _{\mat{n_1,n_2,\dots ,n_{2k+1}\in \Bo{Z}^d\\ n=n_1-n_2+\dots +n_{2k+1}}}\chf{\Sc{A}}\prod _{l=1}^{2k+1}\om _l(n_l)}{\ell ^2_s(\Bo{Z}^d_n)}\lec \prod _{l=1}^{2k+1}\tnorm{\om _l}{\ell ^2_s},\\
\text{(B2)}\qquad &\hz\hz\sup _{\mu \in \Bo{Z}}\norm{\hz\sum _{\mat{n_1,n_2,\dots ,n_{2k+1}\in \Bo{Z}^d\\ n=n_1-n_2+\dots +n_{2k+1}}}\hz\hz\hz\chf{\Sc{A}^c\cap \{ \Phi =\mu \}}\prod _{l=1}^{2k+1}\om _l(n_l)}{\ell ^r_{\sigma}(\Bo{Z}^d_n)}\lec \min _{1\le q\le 2k+1}\tnorm{\om _q}{\ell ^r_{\sigma}}\prod _{\mat{l=1\\ l\neq q}}^{2k+1}\tnorm{\om _l}{\ell ^2_{s_1}},\\
\text{(B2)'}\qquad &\norm{\sum _{\mat{n_1,n_2,\dots ,n_{2k+1}\in \Bo{Z}^d\\ n=n_1-n_2+\dots +n_{2k+1}}}\chf{\Sc{A}^c}\prod _{l=1}^{2k+1}\om _l(n_l)}{\ell ^r_{\sigma}(\Bo{Z}^d_n)}\lec \min _{1\le q\le 2k+1}\tnorm{\om _q}{\ell ^r_{\sigma}}\prod _{\mat{l=1\\ l\neq q}}^{2k+1}\tnorm{\om _l}{\ell ^2_{s_2}},\\
\text{(B3)}\qquad &\norm{\sum _{\mat{n_1,n_2,\dots ,n_{2k+1}\in \Bo{Z}^d\\ n=n_1-n_2+\dots +n_{2k+1}}}\chf{\Sc{A}^c}\prod _{l=1}^{2k+1}\om _l(n_l)}{\ell ^r_\sigma (\Bo{Z}^d_n)}\lec \prod _{l=1}^{2k+1}\tnorm{\om _l}{\ell ^2_s}.
}
\end{enumerate}
\end{prop}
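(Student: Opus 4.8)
All the estimates in Proposition~\ref{prop:fundamental} are purely combinatorial $(2k+1)$-linear bounds for sequences on $\Bo{Z}^d$, and I would prove them by dyadic decomposition of the frequencies together with elementary divisor-counting estimates (this is the only number-theoretic ingredient, which is why the method is confined to rational tori). I would first dispose of the high-regularity bound (B1)$'$ and the resonant bound (R), and then treat (B1), (B2), (B2)$'$ and (B3) by a common counting scheme on $\Sc{A}^c$.

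The bound (B1)$'$ is immediate: for $s_2>d/2$ one has $\ell^2_{s_2}(\Bo{Z}^d)\hookrightarrow\ell^1$, and together with the elementary inequality $\LR{n}^{s_2}\lec\max_l\LR{n_l}^{s_2}$, valid whenever $n=n_1-n_2+\dots+n_{2k+1}$, this makes $\ell^2_{s_2}$ a convolution algebra, so $2k$ iterations of the bilinear algebra inequality give (B1)$'$. For (R) I would exploit the coincidences built into $\Sc{A}$, which force some of the \emph{extreme} frequencies to be equal and thereby lower the effective multilinearity. In the cubic case the constraint $n_2=n_1$ forces $n=n_3$, the inner sum collapses to $\sum_m\om_1(m)\bbar{\om_2(m)}$, bounded by $\tnorm{\om_1}{\ell^2}\tnorm{\om_2}{\ell^2}$, and the remaining slot carries its full $\ell^2_s$-norm; since $s\ge0$ (recall that $s_c\ge0$ whenever $(d,k)\ne(1,1)$) this closes (R). For general $k$ one argues similarly on $\Sc{A}_1,\Sc{A}_2$ (and, for $k=2$, $d=2$, on $\Sc{A}_3$), but after the collapse one is left with a genuinely multilinear tail in the remaining frequencies: when $s>d/2$ this is again an algebra bound, while for $s<d/2$ (which occurs for $d\le2$, $k\ge2$) it requires combining Cauchy--Schwarz with bilinear product estimates on $\Bo{Z}^d$ (which in $d=2$ themselves rest on divisor bounds) --- this is precisely why $\Sc{A}$ is split into $\Sc{A}_1,\Sc{A}_2$ and the extra piece $\Sc{A}_3$ with its size constraint $\LR{n_{[2]}}\le\LR{n_{[3]}}^{3/2}$.

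The heart of the proof is the non-resonant family. After dyadic localisation $|n_l|\sim N_l$ and a relabelling so that $N_1\ge N_2\ge\dots\ge N_{2k+1}$, the mechanism is that on $\Sc{A}^c$ the two largest frequencies are distinct (and, via $\Sc{A}_2,\Sc{A}_3$, the third is separated from the second), so once $n$, all the smaller frequencies and the value $\Phi=\mu$ are frozen, the pair $(n_{[1]},n_{[2]})$ is constrained by a vector relation $n_{[1]}\pm n_{[2]}=a$ and a scalar relation stemming from $\Phi=\mu$; using also $n_{[1]}\ne n_{[2]}$, this pins the pair down to $O(1)$ choices when $d=1$ and to a codimension-one lattice set of cardinality $\lec N_{[1]}^{d-1}$ when $d\ge2$ --- in either case a gain of essentially one full power of $N_{[1]}$ over the unrestricted count, with the classical divisor bound intervening when several frequencies are comparably large. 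Feeding this into Cauchy--Schwarz, assigning the $\ell^r_\sigma$-norm to a factor carrying a top frequency (where the weight $\sigma\le0$ absorbs the derivative loss) and $\ell^2_{s_1}$ to the remaining $2k$ factors, yields (B2); the displayed triples $[s_1,r,\sigma]$ are precisely those for which the $\Phi=\mu$ gain beats the loss, which dictates the three-way split according to $d\ge2+\tfrac2k$, $\{d\le2,\,k\ge2\}$, and $\{d\le3,\,k=1\}$. The bound (B1) is the same argument with $s_1$ in every slot, the configurations $n_{[1]}=n_{[2]}$ being handled as in (R); (B2)$'$ follows from the cruder Young-type bound, since there the non-special factors already carry the high regularity $s_2>d/2$; and (B3) follows from Young's inequality and a dyadic summation exploiting $s>s_c$ and the room afforded by $\sigma\le0$, with the $\Sc{A}^c$ structure again used in the low-dimensional cases.

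The main obstacle will be (B2), and the equivalent (B1), in the scale-subcritical low-dimensional regimes $\{d=2,3,\,k=1\}$ and $\{d=1,2,\,k\ge2\}$, where $s_c$ is so small (even $0$) that essentially no loss of derivatives can be tolerated. There one has to extract the sharpest possible gain from the modulation constraint via divisor bounds, and for $k\ge2$ one reduces the $(2k+1)$-linear sum to a trilinear core in the three largest frequencies --- estimated via Lemma~\ref{lem:trilinear2}, whose small gain $\e(k)$ is what lets $s_1$ be taken below $s_e$ --- while the fine-tuned definition of $\Sc{A}$ removes exactly the interactions (the largest frequency comparable to the third- or fourth-largest) that the counting alone cannot absorb. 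Keeping the resulting bounds summable over all dyadic configurations $N_1\ge\dots\ge N_{2k+1}$, in particular over the ``doubly high'' regime $N_1\sim N_2\gg N_3$, is the delicate step; the same care is needed to close (R) for $k\ge2$.
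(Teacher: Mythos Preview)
Your sketch has the right overall architecture, but there is a genuine gap in the counting that underlies (B1) and the $\ell^2$-part of (B2). You say that on $\Sc{A}^c$, once $n$, the smaller frequencies and $\Phi=\mu$ are fixed, the pair $(n_{[1]},n_{[2]})$ lies in a ``codimension-one lattice set of cardinality $\lec N_{[1]}^{d-1}$''. This is the hyperplane bound \eqref{est-d-}, and it is \emph{not} good enough: plugging it into Cauchy--Schwarz gives (B1) only for $s_1>\tfrac{d-1}{2}$, whereas you need $s_1>s_c=\tfrac{d}{2}-\tfrac{1}{k}$. (For $d=2$, $k=1$ this is the difference between $s_1>\tfrac12$ and $s_1>0$.) The point you are missing is that the sign pattern in $\Phi$ matters: if the two frequencies you single out have indices of the \emph{same parity} (both odd or both even), the vector and scalar constraints read $n_i+n_j=a$, $|n_i|^2+|n_j|^2=b$, which places $(n_i,n_j)$ on a \emph{sphere}, and the lattice-point count drops to $N^{d-2+\eta}$ (Lemma~\ref{lem:numbertheory}). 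The paper exploits this systematically: rather than pairing the two largest frequencies (whose parities are out of your control), it pairs the dual variable $n_0$ with $n_2$ and $n_1$ with $n_3$, always getting the sphere bound; see Lemma~\ref{lem:trilinear} and its proof. This is what makes (B1) close for all $s_1>s_c$. Your phrase ``divisor bound intervening when several frequencies are comparably large'' suggests you have not identified this mechanism; the divisor bound is precisely what yields $N^{d-2+\eta}$ on the sphere, not an auxiliary device for a special regime.

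A second, smaller gap: in the case $k=1$, $d=2,3$ the triple $[s_1,r,\sgm]$ with $r=\tfrac{10}{2d-3}$ does not come from a direct Cauchy--Schwarz argument. It is obtained by \emph{interpolating} the $\ell^2_{-s_c}$ estimate (Corollary~\ref{cor:trilinear}, built on the sphere count above) against the $\ell^\infty$ estimate of Lemma~\ref{lem:trilinear2} (which \emph{does} use the weaker hyperplane bound, hence the threshold $\tfrac{d-1}{2}$ there), and then optimising in the interpolation parameter $\theta=\tfrac{2d-3}{5}$. Your sketch treats (B2) as a single direct estimate and never mentions interpolation, so as written it would not produce the stated $r,\sgm$. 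Finally, for (R) with $k\ge2$ the paper needs no divisor bounds at all: the collapse from $\Sc{A}_1,\Sc{A}_2,\Sc{A}_3$ is closed by Young's inequality and the weight redistribution $\LR{n}^s\lec\LR{n_{[1]}}^s$, $\LR{n_{[l]}}^{-d/(2k)}$ on the lower frequencies, which lands exactly at regularity $s_e$.
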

Once Proposition~\ref{prop:fundamental} is proved, Theorem~\ref{thm:uniqueness} follows from \cite[Theorem~1.1]{K-all}. 
See \cite[Section~3]{K-all} for details.

\begin{rem}
We can also obtain existence of local-in-time weak solutions to \eqref{NLS} for any subcritical regularities $s>s_c$ (unless $d=k=1$) by combining Proposition~\ref{prop:fundamental} (i) above with Theorem~7.3 (and the argument in Remark~7.4) in \cite{K-all}.
However, this result is not so meaningful as unconditional uniqueness shown in Theorem~\ref{thm:uniqueness}, since the weak solutions constructed by \cite[Theorem~7.3]{K-all} turn out to be identical with the (distributional) solutions constructed in former works by the fixed point argument in $X^{s,b}$- or $U^2$, $V^2$-type spaces using the Strichartz estimates.
Note that \cite[Theorem~7.3]{K-all} does not in itself imply any property of the weak solutions by which the nonlinearity can make sense within the distributional framework.
\end{rem}

In Section~\ref{sec:number} we will prove several estimates on the number of lattice points satisfying certain relations.
These estimates will be used to prove Proposition~\ref{prop:fundamental} in Section~\ref{sec:proof}.


\bigskip
\section{Preliminaries}\label{sec:number}

In this section, we prepare several estimates on the number of lattice points satisfying certain relations.
These estimates are the key ingredients of the proof of the main multilinear estimates and shown by use of combinatorial tools such as the divisor bound: For any $\e >0$ there is $C>0$ such that $\# \Shugo{m\in \Bo{N}}{\text{$m$ divides $n$}}\le Cn^\e$ for any positive integer $n$.

\begin{lem}\label{lem:numbertheory}
Let $d\ge 2$.
Then, for any $\eta >0$ there exists $C>0$ such that
\begin{gather}
\label{numberA} \# \Shugo{n\in \Bo{Z}^d}{|n-n^*|^2=\mu ^*,\,n\in B_R^d}\le CR^{d-2+\eta},\\
\label{numberB} \# \Shugo{(p,q)\in \Bo{Z}^2}{(p-p^*)^2+3(q-q^*)^2=\mu ^*,\,(p,q)\in B_R^2}\le CR^{\eta},
\end{gather}
for any $n^*\in \Bo{Z}^d$, $(p^*,q^*)\in \Bo{Z}^2$, $\mu ^*\ge 0$, and any ball $B_R^d\subset \R^d$ of radius $R>1$.
\end{lem}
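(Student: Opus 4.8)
The plan is to reduce both \eqref{numberA} and \eqref{numberB} to a single two‑dimensional estimate and prove the latter by a dichotomy in the size of $\mu^*$. Let $Q$ be a positive definite integral binary quadratic form; I claim
\[
\#\Shugo{v\in\Bo{Z}^2}{Q(v-v^*)=\mu^*,\ v\in D}\lec_\eta R^\eta
\]
for every $v^*\in\Bo{Z}^2$, every $\mu^*\ge0$, and every disc $D\subset\R^2$ of radius $R>1$. Granting this, \eqref{numberB} is the case $Q(p,q)=p^2+3q^2$ and \eqref{numberA} with $d=2$ is the case $Q(a,b)=a^2+b^2$; for $d\ge3$ one obtains \eqref{numberA} by fixing the last $d-2$ coordinates of $n$ --- at most $(2R+1)^{d-2}$ choices, since $n\in B_R^d$ confines each coordinate to an interval of length $2R$ --- and, for each such choice, applying the two‑dimensional bound with $Q(a,b)=a^2+b^2$ to $(n_1-n_1^*)^2+(n_2-n_2^*)^2=\mu^*-\sum_{j\ge3}(n_j-n_j^*)^2$ on a suitable disc of radius $R$ (discarding choices for which the right side is negative); the product of the two counts is $\lec_\eta R^{d-2+\eta}$. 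Since $Q(v-v^*)$ is always a nonnegative integer I may assume $\mu^*\in\Bo{Z}_{\ge0}$ and, after a translation, $v^*=0$.

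If $\mu^*\le144R^6$, I would simply bound the count by the total number of $v\in\Bo{Z}^2$ with $Q(v)=\mu^*$. For any fixed positive definite integral binary quadratic form, the number of such representations of a positive integer $M$ is $\lec_\e M^\e$ --- a standard consequence of the divisor bound (for $a^2+b^2$ directly from $r_2(M)=4\sum_{m\mid M}\chi_{-4}(m)$, and for $p^2+3q^2$ after passing to the Eisenstein integers and counting ideals of norm $M$). Taking $\e=\eta/6$ then gives $\lec_\eta R^\eta$.

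The substantive case is $\mu^*>144R^6$, where I claim there are at most two solution points. Write $Q(x,y)=x^2+cy^2$ with $c\in\{1,3\}$ and suppose $P_1,P_2,P_3$ are three distinct lattice points with $Q(P_i)=\mu^*$; they have pairwise distances $\le2R$. The linear map $L(x,y):=(x,\sqrt c\,y)$ has determinant $\sqrt c$ and satisfies $|L(w)|^2=Q(w)$, so $L(P_1),L(P_2),L(P_3)$ lie on the circle of radius $\sqrt{\mu^*}$, have pairwise distances $\le2\sqrt c\,R\le2\sqrt3\,R$, and --- unless the $P_i$ are collinear --- span a triangle of area $\sqrt c\cdot\operatorname{Area}(P_1P_2P_3)\ge\sqrt c/2$, since a non‑collinear lattice triangle has area $\ge\tfrac12$. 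The circumradius formula then forces
\[
\sqrt{\mu^*}=\frac{\prod_{i<j}|L(P_i)-L(P_j)|}{4\,\operatorname{Area}\big(L(P_1)L(P_2)L(P_3)\big)}\le\frac{(2\sqrt c\,R)^3}{4\cdot(\sqrt c/2)}=4cR^3\le12R^3,
\]
so $\mu^*\le144R^6$, a contradiction. Hence any three solution points are collinear, so the solution set lies on one line, which meets the ellipse $\shugo{Q=\mu^*}$ in at most two points. Combining the two regimes (and using $R>1$) proves the two‑dimensional estimate, hence \eqref{numberA} and \eqref{numberB}.

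The argument is elementary, and I do not foresee a serious obstacle. The two points that require a little care are: extracting the $\lec_\e M^\e$ bound for representations by $p^2+3q^2$ from the divisor bound rather than taking it for granted; and, in the large‑$\mu^*$ case, making sure that the triangle area appearing in the circumradius identity is bounded below by an absolute constant whenever the three lattice points fail to be collinear --- which is precisely the half‑integrality of the area of a lattice triangle.
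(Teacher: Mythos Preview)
Your proof is correct and follows essentially the same route as the paper: reduce \eqref{numberA} for $d\ge3$ to the planar case by freezing $d-2$ coordinates, then split according to $\mu^*\lec R^6$ versus $\mu^*\gg R^6$, invoking the divisor bound in the small regime and Jarn\'ik's ``at most two lattice points'' observation in the large regime. The paper simply cites \cite{J26} and \cite{B07} for the latter and \cite{B93-1} for the $p^2+3q^2$ representation bound, whereas you spell out the circumradius/half-integral-area argument and the Eisenstein-integer reduction explicitly; your packaging of both \eqref{numberA} and \eqref{numberB} as a single statement about positive definite integral binary forms is a mild but pleasant unification, not a different method.
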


\begin{proof}
We only consider the case $d=2$ for \eqref{numberA}; for $d\ge 3$, it suffices to fix $d-2$ components of $n$ (which amounts to $CR^{d-2}$) and then apply the 2d bound for the remaining two components.

When $\mu ^*\lec R^6$, we recall the well-known estimate on the number of lattice points on a circle for the estimate \eqref{numberA}, while for \eqref{numberB} we refer to the argument in \cite{B93-1}, p.117.
When $\mu ^*\gg R^6$, Jarn\'ik's geometric observation \cite{J26} shows that there are at most two points; see \mbox{e.g.} Lemma~1.5 in \cite{B07}.
\end{proof}

\begin{cor}\label{cor:number}
We have the following estimates.

\noindent (i) For any $\eta >0$ there exists $C>0$ such that for any $R>1$ and $n^*,n_*,\mu ^*\in \Bo{Z}$,
\eq{est-1'+}{
\# \Shugo{(n_1,n_2,n_3)\in \Bo{Z}^3}{n_1+n_2+n_3=n^*,\,n_1^2+n_2^2+n_3^2=\mu ^*,\,& \\
|n_1-n_*|+|n_2|\le R} &\le CR^{\eta}. 
}

\noindent (ii) Let $d\ge 2$.
For any $\eta >0$ there is $C>0$ such that for any $R,R_1,R_2>1$, $n^*,n_*\in \Bo{Z}^d$, $\mu ^*\in \Bo{Z}$,
\eq{est-d+}{
&\# \Shugo{(n_1,n_2)\in (\Bo{Z}^d)^2}{n_1+n_2=n^*,\,|n_1|^2+|n_2|^2=\mu ^*,\,|n_1-n_*|\le R}\le CR^{d-2+\eta},
}
\eq{est-d'+}{
\# \Shugo{(n_1,n_2,n_3)\in (\Bo{Z}^d)^2}{~&n_1+n_2+n_3=n^*,\,|n_1|^2+|n_2|^2+|n_3|^2=\mu ^*,\\
&|n_1|\le R_1,\,|n_2|\le R_2} \le C\max \shugo{R_1,R_2}^{d-2+\eta}\min \shugo{R_1,R_2}^d.
}
\end{cor}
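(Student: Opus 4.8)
The plan is to reduce everything to Lemma~\ref{lem:numbertheory} by eliminating variables using the linear constraint, which turns each system into the problem of counting lattice points on a sphere (or on a quadratic curve), to which the divisor-bound estimates \eqref{numberA}--\eqref{numberB} apply. For part (ii), estimate \eqref{est-d+} is the cleanest case: from $n_1+n_2=n^*$ we get $n_2=n^*-n_1$, so the quadratic constraint $|n_1|^2+|n_2|^2=\mu^*$ becomes $|n_1|^2+|n^*-n_1|^2=\mu^*$, i.e. $2|n_1-\tfrac12 n^*|^2=\mu^*-\tfrac12|n^*|^2$; after clearing denominators (multiplying by $2$ to stay on the integer lattice, or completing the square directly) this is of the form $|n_1-c|^2=\mu^{**}$ for a fixed center $c$ and fixed $\mu^{**}$, and the localization $|n_1-n_*|\le R$ confines $n_1$ to a ball of radius $R$. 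Applying \eqref{numberA} gives the bound $CR^{d-2+\eta}$, and $n_2$ is then determined. I should be slightly careful that $c$ need not be in $\Bo{Z}^d$, but since $n_1$ ranges over $\Bo{Z}^d$ one can either work with $2n_1$ (shifting center to an integer point and scaling the radius by a harmless factor $2$) or simply invoke \eqref{numberA} for the translated-and-dilated lattice, which is just as valid.

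For \eqref{est-d'+}, I would again use $n_3=n^*-n_1-n_2$ to eliminate $n_3$, so that the quadratic constraint reads $|n_1|^2+|n_2|^2+|n^*-n_1-n_2|^2=\mu^*$. The idea is to first fix $n_2$: there are at most $C\min\{R_1,R_2\}^d$ choices for whichever of $n_1,n_2$ has the smaller radius bound — say without loss of generality $R_2=\min\{R_1,R_2\}$, so $n_2$ ranges over a ball of radius $R_2$, contributing $CR_2^d$. Once $n_2$ is fixed, the constraint becomes a quadratic equation in $n_1$ alone; expanding, the quadratic form in $n_1$ is $|n_1|^2+|n_1-(n^*-n_2)|^2=2|n_1-\tfrac12(n^*-n_2)|^2+(\text{const depending on }n_2)$, so again $|n_1-c(n_2)|^2=\mu^{**}(n_2)$ with $n_1$ confined to a ball of radius $R_1$. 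Estimate \eqref{numberA} then gives at most $CR_1^{d-2+\eta}$ choices of $n_1$, and $n_3$ is determined. Multiplying, the total is $CR_1^{d-2+\eta}R_2^d=C\max\{R_1,R_2\}^{d-2+\eta}\min\{R_1,R_2\}^d$, as claimed. (I need to double-check the case $R_1=\min$: then fix $n_1$ first using $CR_1^d$ choices, and count $n_2$ in a ball of radius $R_2$ via \eqref{numberA} to get $CR_2^{d-2+\eta}$; the product is again symmetric, giving the same bound. Both orderings work because the roles of $n_1$ and $n_2$ in the quadratic form are symmetric.)

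For part (i), the situation is $d=1$, where \eqref{numberA} is useless (a sphere in $\R^1$ has at most two points but the localization ball is one-dimensional, so this is empty of content); instead one uses \eqref{numberB}. From $n_1+n_2+n_3=n^*$ we eliminate $n_3=n^*-n_1-n_2$, and the constraint $n_1^2+n_2^2+n_3^2=\mu^*$ becomes a quadratic equation in $(n_1,n_2)\in\Bo{Z}^2$. Expanding $n_1^2+n_2^2+(n^*-n_1-n_2)^2$, the homogeneous quadratic part in $(n_1,n_2)$ is $2n_1^2+2n_2^2+2n_1n_2$, which is positive-definite; after an integral (or rational) linear change of variables this binary quadratic form is $\mathrm{GL}_2(\Bo{Q})$-equivalent to $p^2+3q^2$ (both have discriminant $-3$ up to squares — indeed $2n_1^2+2n_1n_2+2n_2^2$ has discriminant $4-16=-12=-3\cdot 4$, matching $p^2+3q^2$). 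So completing the square/diagonalizing recasts the constraint as $(p-p^*)^2+3(q-q^*)^2=\mu^{**}$ for suitable $p^*,q^*$, with $(p,q)$ in a ball of radius $\lec R$ (the linear map is fixed, so it distorts the localization region $|n_1-n_*|+|n_2|\le R$ into a region of comparable size). Estimate \eqref{numberB} then yields $CR^\eta$, and $n_3$ is determined.

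The main obstacle I anticipate is the bookkeeping around the change of variables in part (i): one must verify that the binary form $2n_1^2+2n_1n_2+2n_2^2$ obtained after elimination is genuinely $\mathrm{GL}_2(\Bo{Z})$- or at worst $\mathrm{GL}_2(\Bo{Q})$-equivalent to $p^2+3q^2$ so that \eqref{numberB} (stated only for that specific form) applies, rather than to some other form of discriminant $-3$ such as $2(p^2+pq+q^2)$ — though all forms of discriminant $-3$ are in fact $\mathrm{SL}_2(\Bo{Z})$-equivalent to $p^2+pq+q^2$, which counts the same lattice points as $p^2+3q^2$ up to a bounded factor, so this is a non-issue in the end. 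A minor additional point is tracking that the dilation/translation artifacts from working with $2n_1$ rather than $n_1$ only inflate the radius by an absolute constant, which is absorbed into $C$; since the exponents $\eta$ are arbitrary this costs nothing.
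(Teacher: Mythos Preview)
Your proposal is correct and follows essentially the same approach as the paper: eliminate a variable via the linear constraint, complete the square, and apply Lemma~\ref{lem:numbertheory}. For part (ii) your argument is identical to the paper's; for part (i) the paper simply writes down the explicit integral substitution $p=3n_1-3n^*$, $q=n_1-n^*+2n_2$, which yields $(p+2n^*)^2+3q^2=6\mu^*-2(n^*)^2$ directly and bypasses your abstract discussion of equivalence classes of binary forms---this cleanly resolves the bookkeeping concern you flagged, since the map $(n_1,n_2)\mapsto(p,q)$ is integral and injective.
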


\begin{proof}
(i) The condition for \eqref{est-1'+} implies
\eqs{\big( (3n_1-3n^*) +2n^*\big) ^2+3(n_1-n^*+2n_2) ^2=6\mu ^*-2(n^*)^2,\\
(3n_1-3n_*)^2+(n_1-n^*+2n_2)^2\le 16R^2.}
\eqref{est-1'+} then follows from \eqref{numberB} with $p=3n_1-3n^*$ and $q=n_1-n^*+2n_2$.

(ii) For \eqref{est-d+}, the condition implies
\eqq{\big| (2n_1-2n^*)+n^*\big| ^2=2\mu ^*+|n^*|^2,\quad | 2n_1-2n^*| \le 2R,}
and the estimate follows by \eqref{numberA} with $n=2n_1-2n^*$.
\eqref{est-d'+} follows easily from \eqref{est-d+} fixing one of $n_1,n_2$ first.
\end{proof}

\begin{lem}\label{lem:number2}
We have the following estimates.

\noindent (i) For any $\eta >0$ there exists $C>0$ such that for any $R>1$ and $n^*,n_*,\mu ^*\in \Bo{Z}$,
\begin{gather}
\begin{split}
\# \Shugo{(n_1,n_2,n_3)\in \Bo{Z}^3}{n_1-n_2+n_3=n^*,\,n_1^2-n_2^2+n_3^2=\mu ^*,\,&\\
n_2\neq n_1,\,n_2\neq n_3,\,|n_1|+|n_3|\le R} &\le CR^{\eta},
\end{split} \label{est-1'-1}\\[5pt]
\begin{split}
\# \Shugo{(n_1,n_2,n_3)\in \Bo{Z}^3}{n_1-n_2+n_3=n^*,\,n_1^2-n_2^2+n_3^2=\mu ^*,\,&\\
n_2\neq n_1,\,n_2\neq n_3,\,|n_1|+|n_2|\le R} &\le CR^{\eta}.
\end{split} \label{est-1'-2}
\end{gather}

\noindent (ii) Let $d\ge 2$.
For any $\eta >0$ there is $C>0$ such that for any $R,R_1,R_2,R_3>1$, $n^*\in \Bo{Z}^d$, $\mu ^*\in \Bo{Z}$,
\begin{gather}
\label{est-d-} \# \Shugo{(n_1,n_2)\in (\Bo{Z}^d)^2}{n_1-n_2=n^*\neq 0,\,|n_1|^2-|n_2|^2=\mu ^*,\,|n_1|\le R}\le CR^{d-1},\\[5pt]
\begin{split}
\# \big\{ \,(n_1,n_2,n_3)\in (\Bo{Z}^d)^3:n_1-n_2+n_3=n^*,\,|n_1|^2-|n_2|^2+|n_3|^2=\mu ^*,\,&\\
n_2\neq n_1,\,n_2\neq n_3,\,|n_1|\le R_1,\,|n_3|\le R_3\,\big\} \le CR_1^{d-1}R_3^{d-1}&\max \{ R_1,R_3\} ^{\eta},
\end{split} \label{est-d'-1}\\[5pt]
\begin{split}
\# \big\{ \,(n_1,n_2,n_3)\in (\Bo{Z}^d)^3:n_1-n_2+n_3=n^*,\,|n_1|^2-|n_2|^2+|n_3|^2=\mu ^*,\,&\\
n_2\neq n_1,\,n_2\neq n_3,\,|n_1|\le R_1,\,|n_2|\le R_2\,\big\} \le CR_1^{d-1}R_2^{d-1}&\max \{ R_1,R_2\}^{\eta},
\end{split} \label{est-d'-2}\\[5pt]
\begin{split}
\# \Shugo{(n_1,n_2,n_3)\in (\Bo{Z}^d)^3}{~&n_1-n_2+n_3=n^*,\,|n_1|^2-|n_2|^2+|n_3|^2=\mu ^*,\\
&|n_1|\le R_1,\,|n_3|\le R_3} \le C\max \{ R_1,R_3\} ^{d}\min \{ R_1,R_3\} ^{d-2+\eta}.
\end{split} \label{est-d'-}
\end{gather}
\end{lem}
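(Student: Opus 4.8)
The plan is to prove Lemma~\ref{lem:number2} by reducing the counting problems to the two-variable estimates already established in Lemma~\ref{lem:numbertheory} and Corollary~\ref{cor:number}, exploiting the fact that two linear/quadratic constraints effectively pin down a lower-dimensional set. For the one-dimensional estimates \eqref{est-1'-1} and \eqref{est-1'-2}, I would first use the two constraints $n_1-n_2+n_3=n^*$ and $n_1^2-n_2^2+n_3^2=\mu^*$ to eliminate $n_3$, obtaining a single quadratic relation in $(n_1,n_2)$; after completing the square this should take the shape of a conic in two integer variables of the type covered by \eqref{numberB} (or a degenerate one), and the key point is that the conditions $n_2\neq n_1$, $n_2\neq n_3$ rule out the degenerate factorizations where the conic becomes a union of two lines (which would carry $\sim R$ points). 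The size constraint $|n_1|+|n_3|\le R$ (resp.\ $|n_1|+|n_2|\le R$) then confines the conic to a ball of radius $\lec R$, so \eqref{numberB} gives the $R^\eta$ bound. I expect \eqref{est-1'-1} and \eqref{est-1'-2} to differ only in which variable is kept bounded a priori, with the algebra essentially symmetric.

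For the multidimensional estimates, I would handle \eqref{est-d-} first, as it feeds the others. From $n_1-n_2=n^*$ and $|n_1|^2-|n_2|^2=\mu^*$ one gets $2n_1\cdot n^* - |n^*|^2=\mu^*$, i.e.\ $n_1$ lies on an affine hyperplane (here $n^*\neq 0$ is exactly what makes this a genuine hyperplane rather than all of $\R^d$), intersected with the ball $|n_1|\le R$; such a set contains $\lec R^{d-1}$ lattice points. For \eqref{est-d'-1} and \eqref{est-d'-2} I would fix one of the free vectors — say $n_3$ in \eqref{est-d'-1} — reducing to a system in $(n_1,n_2)$ with $n_1-n_2 = n^*-n_3 =: \tilde n^*$ and $|n_1|^2-|n_2|^2 = \mu^*-|n_3|^2 =: \tilde\mu^*$; the conditions $n_2\neq n_1,\ n_2\neq n_3$ guarantee $\tilde n^*\neq 0$ (that is precisely $n_1\neq n_2$) so \eqref{est-d-} applies, yielding $\lec R_1^{d-1}$ choices of $(n_1,n_2)$ for each of the $\lec R_3^d$ choices of $n_3$; symmetrizing in the roles of $R_1,R_3$ (summing over whichever endpoint is smaller, counting the larger by its ball) and absorbing the loss into $\max\{R_1,R_3\}^\eta$ gives the claim. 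Estimate \eqref{est-d'-2} is the same with $n_2$ in place of $n_3$, using $n_2\neq n_1$ to keep the difference $n^*+n_2-$\,(the relevant vector) nonzero.

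Finally, \eqref{est-d'-} is the one estimate with no diophantine gain ($R^\eta$ only, no $R^{-1}$ savings), and it should follow directly from Corollary~\ref{cor:number}(ii): the two constraints $n_1-n_2+n_3=n^*$ and $|n_1|^2-|n_2|^2+|n_3|^2=\mu^*$ together with $|n_1|\le R_1,\ |n_3|\le R_3$ match the shape of \eqref{est-d'+} once one absorbs the sign of $n_2$ (replacing $n_2$ by $-n_2$ turns the minus signs into plus signs, which is harmless for counting), giving $\lec \max\{R_1,R_3\}^{d-2+\eta}\min\{R_1,R_3\}^d$ after fixing the variable ranging over the smaller ball. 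The main obstacle, and the step deserving the most care, is the degenerate-conic analysis in the one-dimensional case \eqref{est-1'-1}--\eqref{est-1'-2}: one must verify that after eliminating a variable the resulting quadratic form in two integer variables is genuinely non-degenerate (definite, matching \eqref{numberB}) exactly under the hypotheses $n_2\neq n_1$ and $n_2\neq n_3$, since without those two exclusions the conic factors and the count jumps from $R^\eta$ to $R$; tracking the discriminant of the reduced form and identifying its factorization with the two excluded hyperplanes is where the argument could go wrong if done carelessly.
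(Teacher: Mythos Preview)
Your plan for \eqref{est-d-} is fine and matches the paper. Everywhere else there are real gaps.

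\textbf{The one-dimensional bounds \eqref{est-1'-1}--\eqref{est-1'-2}.} After eliminating $n_3$ (or $n_2$) the relation you obtain is
\[
(n^*-n_1)(n^*-n_3)=\tfrac12\big((n^*)^2-\mu^*\big)=:\mu^{**},
\]
i.e.\ a \emph{hyperbola}, not an ellipse. No integer change of variables turns $ab=\mu^{**}$ into a positive definite form, so \eqref{numberB} (which is about $p^2+3q^2$) is simply inapplicable. The conditions $n_2\neq n_1$, $n_2\neq n_3$ do exactly what you say---they force $\mu^{**}\neq 0$---but after that the argument is arithmetic, not geometric: the paper uses the divisor bound when $|\mu^{**}|\lec R^6$, and when $|\mu^{**}|\gg R^6$ (so $|n^*|\gg R$) a gcd/lcm argument shows there are at most two (resp.\ one) solutions. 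Your proposal contains neither ingredient.

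\textbf{The multidimensional bounds \eqref{est-d'-1}--\eqref{est-d'-2}.} Fixing $n_3$ and applying \eqref{est-d-} gives at best $R_3^{d}R_1^{d-1}$ (or $R_1^{d}R_3^{d-1}$ after symmetrizing), which is a full factor $\min\{R_1,R_3\}$ larger than the claimed $R_1^{d-1}R_3^{d-1}\max\{R_1,R_3\}^{\eta}$; this loss cannot be ``absorbed into $\max^\eta$''. The paper gets the extra saving by working coordinate by coordinate: one writes $\mu^*=\sum_{j=1}^d\mu^*_j$, applies the one-dimensional estimate \eqref{est-1'-1} in each coordinate where it is nondegenerate, and carefully accounts for the degenerate coordinates. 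This is precisely where \eqref{est-1'-1} feeds into the higher-dimensional result, and it is the step your outline misses.

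\textbf{The estimate \eqref{est-d'-}.} The substitution $n_2\mapsto -n_2$ changes the sign in the linear constraint but leaves $|n_2|^2$ unchanged, so the minus sign in $|n_1|^2-|n_2|^2+|n_3|^2$ persists and \eqref{est-d'+} does not apply. In the paper \eqref{est-d'-} is obtained from \eqref{est-d'-1} by adding back the contribution of the excluded sets $\{n_2=n_1\}\cup\{n_2=n_3\}$, which is $O(\max\{R_1,R_3\}^d)$.
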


\begin{proof}
\eqref{est-1'-1}: We deduce from the condition that
\eqq{0\neq (n^*-n_1)(n^*-n_3)=\mu ^{**}:=((n^*)^2-\mu ^*)/2.}
If $|\mu ^{**}|\lec R^6$, the divisor bound implies that there are at most $O(R^\eta )$ choices for $n^*-n_1$ and $n^*-n_3$, which determine $(n_1,n_2,n_3)$.
If $|\mu ^{**}|\gg R^6$, we see that $|n^*|\sim |\mu ^{**}|^{1/2}\gg R^3\ge |n_1|,|n_3|$.
We may assume $n^*-n_1\ge n^*-n_3>0$. 
It turns out that there are at most two choices for such $(n_1,n_2,n_3)$ (\mbox{cf.} \cite{CKSTT04}, Lemma~6.1).
In fact, suppose that there are three different triplets, and let $a,b,c$ be the corresponding values for $n^*-n_1$; hence $a\sim b\sim c\sim n^* \sim (\mu ^{**})^{1/2}\gg R^3$ and they are mutually different.
Since $a,b,c$ divide $\mu^{**}$, $\mathrm{lcm}(a,b,c)$ also divides $\mu ^{**}$ and thus not greater than $\mu ^{**}$.
Moreover, since $a,b,c$ are confined to the interval $[n^*-R,n^*+R]$, we have 
\eqq{\mathrm{gcd}(a,b),\hx \mathrm{gcd}(a,c),\hx \mathrm{gcd}(b,c)\le 2R}
by the Euclidean algorithm.
Now, the identity
\eqq{\mathrm{lcm}(a,b,c)\mathrm{gcd}(a,b)\mathrm{gcd}(a,c)\mathrm{gcd}(b,c)=abc\cdot \mathrm{gcd}(a,b,c)}
from elementary number theory shows that
\eqq{\mu ^{**}(2R)^3\ge abc \sim (\mu ^{**})^{3/2},}
which contradicts the assumption $|\mu ^{**}|\gg R^6$.

\eqref{est-1'-2}: Similarly, we have
\eqq{0\neq (n^*-n_1)(n_1-n_2)=\mu ^{**}}
and the claim follows if $|\mu ^{**}|\lec R^3$.
Suppose $|\mu ^{**}|\gg R^3$ and there are two different triplets $(n_1,n_2,n_3)$ satisfying the condition.
Let $a,b$ denote two different values for $n^*-n_1$, which are confined in the interval $[n^*-R,n^*+R]$ and satisfy $|a|,|b|\ge |\mu ^{**}|/(2R)$.
This time we use 
\eqq{\mathrm{lcm}(a,b)\mathrm{gcd}(a,b)=ab}
to deduce that
\eqq{|\mu ^{**}|\cdot 2R\ge (|\mu ^{**}|/(2R))^2,}
which contradicts the assumption $|\mu ^{**}|\gg R^3$.
Therefore, there is at most one choice in this case.

\eqref{est-d-}: We have
\eqq{(2n_1-n^*)\cdot n^*=\mu ^*,\qquad n^*\neq 0,\qquad |n_1-n_*|\le R,}
by which $n_1$ is restricted on the intersection of a hyperplane and a ball of radius $R$, and therefore the estimate follows.

\eqref{est-d'-1}: Let us assume $R_1\ge R_3$.
We rewrite the condition as
\eqs{n_{1,j}-n_{2,j}+n_{3,j}=n^*_j,\,(n_{1,j})^2-(n_{2,j})^2+(n_{3,j})^2=\mu ^*_j,\,|n_{1,j}|\le R_1,\,|n_{3,j}|\le R_3\quad (1\le j\le d),\\
\mu ^*_1+\mu ^*_2+\dots +\mu ^*_d=\mu ^*,\quad n_2\neq n_1,\quad n_2\neq n_3,}
which in particular yield
\eqq{2(n_{2,j}-n_{1,j})(n_{2,j}-n_{3,j})=(n_j^*)^2-\mu _j^*\qquad (1\le j\le d).}
Note that there is a freedom of choosing $\mu^*_j$'s under the condition $\mu ^*_1+\dots +\mu ^*_d=\mu ^*$.
For each $j$ and $\mu ^*_j$ fixed, the number of possible choices for $(n_{1,j},n_{2,j},n_{3,j})$ is estimated as follows:
\begin{enumerate}
\item If $\mu ^*_j\neq (n^*_j)^2$, then $n_{1,j}\neq n_{2,j}\neq n_{3,j}$ and the bound \eqref{est-1'-1} is applicable, obtaining $O(R_1^\eta )$ for any $\eta >0$.
\item If $\mu ^*_j=(n^*_j)^2$ and $n_{1,j}=n_{2,j}$, we have $n_{3,j}=n^*_j$ and estimate the number of such possibilities by $O(R_1)$.
\item If $\mu ^*_j=(n^*_j)^2$ and $n_{3,j}=n_{2,j}$, we obtain $O(R_3)$ similarly.
\end{enumerate}

By rearranging coordinates we may assume that $\mu ^*_j\neq (n^*_j)^2$ for $1\le j\le \nu$ and $\mu ^*_j=(n^*_j)^2$ for $\nu +1\le j\le d$, for some $0\le \nu \le d$.

If $\nu \ge 1$, for each $1\le j\le \nu -1$ we use the trivial bound $O(R_1R_3)$ on the number of possible $(n_{1,j},n_{2,j},n_{3,j})$, while for $\nu +1\le j\le d$ we invoke the bound in (ii) or (iii) above.
For $j=\nu$, observing that $\mu ^*_\nu$ is now determined by $n^*,\mu ^*$ and $(n_{1,j},n_{2,j},n_{3,j})_{j=1}^{\nu -1}$, we use the bound in (i).
In total, we obtain $O((R_1R_3)^{\nu -1}R_1^{d-\nu}R_1^\eta)$, which is maximized by the claimed one $O(R_1^{d-1}R_3^{d-1}R_1^{\eta})$ when $\nu =d$.

If $\nu =0$, the case (ii) or (iii) occurs for each $1\le j\le d$.
Note that (ii) cannot occur $d$ times; otherwise the condition $n_2\neq n_1$ would be violated.
Since $\mu ^*_j$'s are already fixed, we have an upper bound $O(R_1^{d-1}R_3)$, which is also smaller than the claimed one.

\eqref{est-d'-2}: Simply repeat the argument for \eqref{est-d'-1} using \eqref{est-1'-2} instead of \eqref{est-1'-1}.

\eqref{est-d'-}: Besides \eqref{est-d'-1} it suffices to take into account the case $n_2=n_1$ or $n_2=n_3$, which amounts to $O(\max \shugo{R_1,R_3}^d)$.
We thus obtain the claimed bound.
\end{proof}

\begin{rem}
Consider the bound \eqref{numberA}.
Since $d$-dimensional element $n$ is constrained by one equality $|n-n^*|^2=\mu ^*$, it is initially expected that the number of such $n$'s is of order at most $R^{d-1}$.
In this respect, \eqref{numberA} is better by almost one dimension than expected.
The same is true for all bounds in Lemma~\ref{lem:numbertheory}, Corollary~\ref{cor:number}, and Lemma~\ref{lem:number2}, except for the bound \eqref{est-d-} which is no better than the ``trivial'' one.
This fact has a large impact on the optimality of the regularity range in Theorem~\ref{thm:uniqueness}.
In fact, for Lemma~\ref{lem:trilinear} below we will not use \eqref{est-d-} so that we can cover the full subcritical range $s>s_c$, while for Lemma~\ref{lem:trilinear2} (only in the case $k=1$, $d=2,3$) we will have to rely on \eqref{est-d-}, resulting in non-optimal lower bounds of regularity in Theorem~\ref{thm:uniqueness} for these cases.
See also Remark~\ref{rem:linfty} below.
\end{rem}


\bigskip
\section{Proof of multilinear estimates}
\label{sec:proof}

This section is devoted to the proof of Proposition~\ref{prop:fundamental}.
Let us begin with the following:
\begin{lem}\label{lem:trilinear}
Let $d,k\in \Bo{N}$ with $(d,k)\neq (1,1)$, $s>s_c$.
Then, we have
\eqq{\sum _{\mat{n_0,n_1,n_2,\dots ,n_{2k+1}\in \Bo{Z}^d\\ n_0-n_1+\cdots -n_{2k+1}=0\\  |n_0|^2-|n_1|^2+\cdots -|n_{2k+1}|^2=\mu}}\prod _{l=0}^{2k+1}\om _l(n_l)~\lec~ N_{\max}^{-2s}\prod _{l=0}^{2k+1}N_l^{s}\tnorm{\om _l}{\ell ^2(\Bo{Z}^d)}}
for any $\mu \in \Bo{Z}$, $\shugo{N_l}_{l=0}^{2k+1}\subset \dyadic$, and any non-negative functions $\shugo{\om _l}_{l=0}^{2k+1}\subset \ell ^2(\Bo{Z}^d)$ satisfying $\mathrm{supp}~\om _l\subset \{ n:N_l\le \LR{n}<2N_l\}$, where $N_{\max}:=\max _{0\le l\le 2k+1}N_l$.
Here, the implicit constant is uniform in $\mu$ and $\{ N_l\}$.
\end{lem}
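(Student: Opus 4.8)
The plan is to prove this by iterated multilinear Cauchy--Schwarz, reducing the sum step by step to counts of lattice points subject to one vector relation ($\sum\pm n=$ const) and one scalar relation ($\sum\pm|n|^2=$ const) --- exactly the situations treated in Section~\ref{sec:number}. The factor $N_{\max}^{-2s}$ will come from the ``smoothing'' those counts provide (a sphere carries few lattice points), and the hypothesis $s>s_c$ will turn out to be precisely the regularity needed to absorb the lower frequencies. I would first normalize: since the $\om_l$ are nonnegative and the two relations are invariant under permuting the ``$+$''-indices $\{0,2,\dots,2k\}$ among themselves, permuting the ``$-$''-indices $\{1,3,\dots,2k+1\}$ among themselves, and swapping the two classes together with $\mu\mapsto-\mu$ (which leaves the claim unchanged), one may assume $N_0=N_{\max}$ with $0$ a ``$+$''-index. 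Because the signs in $n_0-n_1+\dots-n_{2k+1}=0$ alternate, one has $N_0\le(2k+1)\max_{l\ge1}N_l$, so the second-largest frequency is $\sim N_{\max}$ as well; whether it is a ``$+$''- or a ``$-$''-index gives the first case distinction. It is also convenient to keep $d=1$ separate, since there the ``sphere'' $\{n\in\mathbb{Z}:|n-n^*|^2=\mu^*\}$ has at most two points and the relevant counts are governed by \eqref{est-1'+}, \eqref{est-1'-1}, \eqref{est-1'-2}.

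The core of the argument is then to isolate the two largest frequencies (and, in the ``$-$''-second-largest case, one further frequency of a suitably chosen sign): writing $n^{*},\mu^{*}$ for the vector/scalar data produced by the remaining ``low'' frequencies, the inner sum over the isolated frequencies becomes a sum over lattice points on a sphere once one completes the square on a same-sign pair among them. Here it is essential to retain the third frequency in the ``$-$''-case and to invoke \eqref{est-d'-}/\eqref{est-d'+} (respectively \eqref{est-1'-1}, \eqref{est-1'-2}, \eqref{est-1'+} for $d=1$) rather than the two-frequency relation $|n|^2-|n'|^2=\mu^{*}$, which is only a hyperplane and whose best available count \eqref{est-d-} is too weak to reach the full subcritical range --- exactly as stressed in the remark following Lemma~\ref{lem:number2}. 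Arranging the Cauchy--Schwarz so that the sphere indicator is paired against one factor of the completed-square pair, and then re-summing the resulting square over the remaining isolated variable and over the low frequencies after a substitution, one is left with a product of $\ell^{2}$-norms times $(\text{sphere count})^{1/2}\times(\text{a second lattice-point count in the low frequencies})^{1/2}$. Both counts are again of the ``one vector plus one scalar relation'' type, and the decisive input is that Section~\ref{sec:number} bounds them essentially one dimension below the trivial bound.

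Collecting these factors and using the two elementary identities $\tfrac{d-2}{2}=s_c$ for $k=1$, and $2k\,s_c=(2k+2)\,\theta(d,2k+2)$ with $\theta(d,p):=\tfrac d2-\tfrac{d+2}{p}$ --- the latter being exactly the exponent the iterated divisor-counting produces when all frequencies are comparable --- the condition $s>s_c$ provides precisely enough room to dominate all the accumulated powers by $N_{\max}^{-2s}\prod_l N_l^{s}$; one then sums the geometric-type dyadic series. The case $d=1$ is run in parallel, after separating off the lower-complexity diagonal contributions (two of the frequencies in a trilinear block coinciding) that the resonance-excluded 1d counts do not cover.

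The hard part will be the bookkeeping in this collection step. Since $s>s_c$ is essentially sharp there is no slack, so one must run the argument through several frequency regimes --- all frequencies comparable (where it is really an $L^{2k+2}_{t,x}$-Strichartz estimate), exactly two or three genuinely large frequencies, and intermediate configurations --- and glue them, each time choosing how many frequencies to isolate and which of the Section~\ref{sec:number} counts to apply so as to match the budget $\prod_{j\ge3}N_{[j]}^{s}$ (where $N_{[j]}$ is the $j$-th largest frequency) term by term. The delicate configurations are exactly those in which only two or three frequencies are large: there the low frequencies cannot be estimated crudely in $\ell^{1}$ but must stay linked to the core through the scalar relation, and the two-frequency hyperplane count must be avoided in favour of the three-frequency bounds \eqref{est-d'-}, \eqref{est-d'+}.
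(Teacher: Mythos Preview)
Your overall strategy coincides with the paper's: reduce by iterated Cauchy--Schwarz to a product of two lattice-point counts drawn from Section~\ref{sec:number}, take the trivial $N_m^{d/2}$ bound on the leftover factors, and check the resulting exponent beats $2s_c$. The paper organizes this as a partition $\{0,\dots,2k+1\}=S_1\cup S_2\cup S_3$ with $S_1,S_2$ disjoint and of cardinality two or three; the two counts $A_\mu,B_\mu$ are over $S_1$ and $S_2$, and $S_3$ absorbs the crude $N_m^{d/2}$.

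Two points where the paper is simpler than what you anticipate. First, the case analysis is minimal: for $d\ge 2$ one always takes $S_1=\{0,2\}$, and $S_2$ is either $\{1,3\}$ or (only when $N_1\ll N_4$) $\{4,1,3\}$; the ``exactly two large frequencies'' configuration is handled uniformly by almost orthogonality (restricting the two largest to balls of radius $N_{\mathrm{third}}$) rather than as a separate regime. Second, for $d=1$ the paper chooses $S_1=\{0,2,4\}$ and $S_2=\{1,3,5\}$, both same-sign triples, so only the ``$+$''-count \eqref{est-1'+} is needed and no diagonal separation arises; the resonance-excluded counts \eqref{est-1'-1}, \eqref{est-1'-2} are not used here. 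Your anticipated proliferation of regimes and diagonal contributions is thus an artifact of isolating ``the largest frequencies'' rather than choosing $S_1,S_2$ to be same-sign blocks wherever possible.
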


\begin{proof}
We may assume by symmetry that 
\eqq{N_0\ge N_2\ge \cdots \ge N_{2k},\qquad N_1\ge N_3\ge \cdots \ge N_{2k+1},\qquad N_0\ge N_1.}
Note that $N_{\max} =N_0\sim \max \{ N_1,N_2\}$.
Moreover, in the case $N_{\max}\sim N_{\mathrm{second}}\gg N_{\mathrm{third}}$, where $N_{\mathrm{second}}$ and $N_{\mathrm{third}}$ are the second and the third largest among $N_l$'s, we may restrict each of $\om _0$ and $\om _l$ corresponding to the largest and the second largest frequencies onto a ball of size $N_{\mathrm{third}}$ by almost orthogonality.

We fix $\mu \in \Bo{Z}$ and write ``$(*)$'' to denote the condition
\eqs{n_0-n_1+\dots -n_{2k+1}=0,\qquad |n_0|^2-|n_1|^2+\dots -|n_{2k+1}|^2=\mu .}
Let $S_1$, $S_2$ be two subsets of the index set $\shugo{0,1,\dots ,2k+1}$ such that $\# S_1,\# S_2\ge 2$ and $S_1\cap S_2=\emptyset$.
Let $S_3:=\shugo{0,1,\dots ,2k+1}\setminus (S_1\cup S_2)$, which may be empty.
Applying the Cauchy-Schwarz inequality several times, we have%
\footnote{If $S_3=\emptyset$, summation and supremum over $(n_m)_{m\in S_3}$ as well as product in $m\in S_3$ do not appear in the following calculation.}
\begin{align*}
&\sum _{\mat{n_0,n_1,\dots ,n_{2k+1}\\ (*)}}\prod _{l=0}^{2k+1}\om _l(n_l)~=~ \sum _{\mat{n_m\\ m\in S_3}}\prod _{m\in S_3}\om _m(n_m) \sum _{\mat{n_j\\ i\in S_2}}\prod _{j\in S_2}\om _j(n_j)\sum _{\mat{n_i;\,i\in S_1\\ (*)}}\prod _{i\in S_1}\om _i(n_i)\\
&\le \Big( \prod _{m\in S_3}N_m^{d/2}\tnorm{\om _m}{\ell ^2}\Big) \sup _{\mat{n_m\\ m\in S_3}} \sum _{\mat{n_j\\ j\in S_2}}\prod _{j\in S_2}\om _j(n_j)\cdot A_\mu ^{1/2}\Big( \sum _{\mat{n_i;\,i\in S_1\\ (*)}}\prod _{i\in S_1}\om _i(n_i)^2\Big) ^{1/2}
\intertext{where $A_\mu =A_\mu \big( (n_j)_{j\in S_2},(n_m)_{m\in S_3}\big) :=\# \Shugo{(n_i)_{i\in S_1}}{(*)}$,}
&\le \Big( \prod _{m\in S_3}N_m^{d/2}\tnorm{\om _m}{\ell ^2}\Big) \sup _{\mat{n_m\\ m\in S_3}} \Big( \sup _{\mat{n_j\\ j\in S_2}}A_\mu ^{1/2}\Big) \Big( \sum _{\mat{n_j\\ j\in S_2}}\prod _{j\in S_2}\om _j(n_j)^2\Big) ^{1/2}\Big( \sum _{\mat{n_j\\ j\in S_2}}\sum _{\mat{n_i;\,i\in S_1\\ (*)}}\prod _{i\in S_1}\om _i(n_i)^2\Big) ^{1/2}\\
&\le \Big( \prod _{m\in S_3}N_m^{d/2}\tnorm{\om _m}{\ell ^2}\Big) \Big( \prod _{j\in S_2}\tnorm{\om _j}{\ell ^2}\Big) \sup _{\mat{n_m\\ m\in S_3}} \Big( \sup _{\mat{n_j\\ j\in S_2}}A_\mu ^{1/2}\Big) \Big( \sum _{\mat{n_i\\ i\in S_1}}B_\mu \prod _{i\in S_1}\om _i(n_i)^2\Big) ^{1/2}
\intertext{where $B_\mu =B_\mu \big( (n_i)_{i\in S_1},(n_m)_{m\in S_3}\big) :=\# \Shugo{(n_j)_{j\in S_2}}{(*)}$,}
&\le \Big( \prod _{m\in S_3}N_m^{d/2}\tnorm{\om _m}{\ell ^2}\Big) \Big( \prod _{j\in S_2}\tnorm{\om _j}{\ell ^2}\Big) \Big( \prod _{i\in S_1}\tnorm{\om _i}{\ell ^2}\Big) \sup _{\mat{n_m\\ m\in S_3}} \Big( \sup _{\mat{n_j\\ j\in S_2}}A_\mu ^{1/2}\cdot \sup _{\mat{n_i\\ i\in S_1}}B_\mu ^{1/2}\Big) \\
&\le \prod _{m\in S_3}N_m^{d/2}\cdot \sup _{\mat{n_j;\,j\in S_2\\ n_m;\,m\in S_3}}A_\mu ^{1/2}\cdot \sup _{\mat{n_i;\,i\in S_1\\ n_m;\,m\in S_3}}B_\mu ^{1/2}\cdot \prod _{l=0}^{2k+1}\tnorm{\om _l}{\ell ^2}.
\end{align*}
Hence, it suffices to show, for $s>s_c$ and suitable $S_1,S_2$, that
\eq{claim1}{\sup _{\mat{n_j;\,j\in S_2\\ n_m;\,m\in S_3}}A_\mu \cdot \sup _{\mat{n_i;\,i\in S_1\\ n_m;\,m\in S_3}}B_\mu \cdot \prod _{m\in S_3}N_m^{d}~\lec~ \Big( N_{\max}^{-2}\prod _{l=0}^{2k+1}N_l\Big) ^{2s}.}

\smallskip
\underline{(I) $d\ge 2$, $k=1$.}
Take $S_1=\shugo{0,2}$, $S_2=\shugo{1,3}$, and thus $S_3=\emptyset$.
If $N_1\gec N_2$, then $N_0=N_{\max}\sim N_1$.
We use \eqref{est-d+} twice and obtain
\eqq{\sup _{n_1,n_3}A_\mu \cdot \sup _{n_0,n_2}B_\mu \lec N_2^{d-2+}N_3^{d-2+},}
which implies \eqref{claim1}.
If $N_2\gg N_1$ so that $N_0=N_{\max}\sim N_2$, we use the almost orthogonality to restrict $n_0$ and $n_2$ onto balls of size $N_1$, which yields the bound $N_1^{d-2+}N_3^{d-2+}$ and thus \eqref{claim1}.

\smallskip
\underline{(II) $d\ge 2$, $k\ge 2$.}

{\bf Case 1: $N_1\gec N_4$}.
Take $S_1=\shugo{0,2}$ and $S_2=\shugo{1,3}$.
In this case the same argument as (I) leads to the desired estimate.
We restrict $n_0,n_2$ as before if $N_2\sim N_{\max}\gg N_1$.
The estimate \eqref{est-d+} implies
\eqq{\text{LHS of \eqref{claim1}}\lec \min \shugo{N_1,N_2}^{d-2+}N_3^{d-2+}\prod _{l=4}^{2k+1}N_l^d\lec \Big( N_{\max}^{-2}\prod _{l=0}^{2k+1}N_l\Big) ^{d-\frac{2}{k}+},}
which is sufficient.

{\bf Case 2: $N_1\ll N_4$}.
In this case $N_0=N_{\max}\sim N_2$ and $N_4=N_{\mathrm{third}}$.
Take $S_1=\shugo{0,2}$, $S_2=\shugo{4,1,3}$, and restrict $n_0,n_2$ into $N_4$-balls if $N_2\gg N_4$.
For the estimate of $A_\mu$ and $B_\mu$ we apply \eqref{est-d+} and \eqref{est-d'-}, respectively.
We have
\eqq{\text{LHS of \eqref{claim1}}&\lec N_4^{d-2+}N_1^{d}N_3^{d-2+}\prod _{l=5}^{2k+1}N_l^d=\Big( N_4^{d-2+}N_1^d\prod_{i=3}^kN_{2i}^d\Big) \Big( N_3^{d-2+}\prod _{j=2}^kN_{2j+1}^d\Big) \\
&\lec \Big( N_{\max}^{-2}\prod _{l=0}^{2k+1}N_l\Big) ^{d-\frac{2}{k}+},}
which is also sufficient.

\smallskip
\underline{(III) $d=1$, $k\ge 2$.}
We take $S_1=\shugo{0,2,4}$ and $S_2=\shugo{1,3,5}$.
Dividing $n_0,n_2$ into $\max \shugo{N_1,N_4}$ scale if $N_2\sim N_{\max}\gg N_1$, we apply \eqref{est-1'+} twice to obtain
\eqq{\text{LHS of \eqref{claim1}}\lec \min \shugo{N_2,\max \shugo{N_1,N_4}}^{0+}N_3^{0+}\prod _{l=6}^{2k+1}N_l\lec \Big( N_{\max}^{-2}\prod _{l=0}^{2k+1}N_l\Big) ^{1-\frac{2}{k}+},}
as desired.

This concludes the proof.
\end{proof}

As a corollary, we obtain the following $\ell ^2$ estimate:
\begin{cor}\label{cor:trilinear}
Let $d,k\in \Bo{N}$ with $(d,k)\neq (1,1)$, $s>s_c$, and $-s\le s'\le s$.
Then, we have
\eqq{&\sup _{\mu \in \Bo{Z}}~\norm{\sum _{\mat{n_1,n_2,\dots ,n_{2k+1}\in \Bo{Z}^d\\ n=n_1-n_2+\dots +n_{2k+1},\, \Phi =\mu}}\prod _{l=1}^{2k+1}\om _l(n_l)}{\ell ^2_{s'}(\Bo{Z}^d_n)}~\lec~ \tnorm{\om _q}{\ell ^2_{s'}}\prod _{\mat{l=1\\ l\neq q}}^{2k+1}\tnorm{\om _l}{\ell ^2_s}}
for any $1\le q\le 2k+1$.
\end{cor}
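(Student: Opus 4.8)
The plan is to deduce the $\ell^2_{s'}$ estimate from the fixed-$\mu$ multilinear bound in Lemma~\ref{lem:trilinear} by a standard Littlewood--Paley decomposition together with a duality/pairing argument that introduces the ``output'' frequency $n$ as a zeroth variable. Write $\om_0$ for a test sequence in $\ell^2_{-s'}(\Bo{Z}^d_n)$, pair the left-hand side against $\LR{n}^{2s'}\om_0(n)$ (absorbing the weight into $\om_0$ and relabeling), and observe that the pairing equals
\eqq{\sum_{\mat{n_0,n_1,\dots,n_{2k+1}\\ n_0=n_1-n_2+\dots+n_{2k+1},\,\Phi=\mu}}\prod_{l=0}^{2k+1}\om_l(n_l),}
which upon changing $n_0\mapsto n_0$, $n_2\mapsto n_2$, etc.\ (and replacing $\om_2,\dots$ by their complex conjugates, which does not affect absolute values) is exactly the object controlled by Lemma~\ref{lem:trilinear}. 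So it suffices to prove the dual estimate
\eqq{\sum_{\mat{n_0,n_1,\dots,n_{2k+1}\\ n_0-n_1+\cdots-n_{2k+1}=0,\,|n_0|^2-\cdots-|n_{2k+1}|^2=\mu}}\prod_{l=0}^{2k+1}|\om_l(n_l)|~\lec~\tnorm{\om_0}{\ell^2_{-s'}}\,\tnorm{\om_q}{\ell^2_{s'}}\prod_{\mat{l=1\\ l\neq q}}^{2k+1}\tnorm{\om_l}{\ell^2_s}}
uniformly in $\mu$ and in $1\le q\le 2k+1$.

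Next I would dyadically decompose each $\om_l=\sum_{N_l}\om_{l,N_l}$ with $\Supp{\om_{l,N_l}}{\{N_l\le\LR{n}<2N_l\}}$, apply Lemma~\ref{lem:trilinear} to each dyadic piece to get the factor $N_{\max}^{-2s}\prod_{l=0}^{2k+1}N_l^s$, and then sum over the dyadic parameters. The point is to reorganize the weight: on the support of a given tuple of dyadic pieces one has $\prod_{l}\tnorm{\om_{l,N_l}}{\ell^2}\sim N_0^{-s'}N_q^{s'}\big(\prod_{l\neq 0,q}N_l^{s}\big)^{-1}\prod_l\tnorm{\om_{l,N_l}}{\ell^2_{(\cdot)}}$ in the appropriate weighted norms, so after inserting the gain $N_{\max}^{-2s}\prod_l N_l^s$ from the lemma, the net power of each $N_l$ with $l\neq 0,q$ is $s-s\ge 0$... here care is needed: the exponent must come out nonpositive (i.e.\ summable) precisely on the frequencies carried in $\ell^2_s$. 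Concretely, since $-s\le s'\le s$, one checks $N_0^{s-s'}\le N_{\max}^{s-s'}$ and $N_q^{s-s'}\le N_{\max}^{s-s'}$ while the remaining frequencies contribute $N_l^{s-s}=1$ times their $\ell^2_s$-weighted norms, and the surplus $N_{\max}^{-2s}\cdot N_{\max}^{(s-s')}\cdot N_{\max}^{(s-s')}=N_{\max}^{-2s'}\cdot\dots$ — one arranges the bookkeeping so that a strictly negative power $N_{\max}^{-\delta}$ (any $\delta>0$, using $s>s_c$) is left over to run the dyadic summation, while the medium/small $N_l$ are summed against the $\ell^2$ Cauchy--Schwarz in the dyadic index.

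The main obstacle is this dyadic summation: a naive use of Lemma~\ref{lem:trilinear} gives a bound with $N_l^0$ on the low-frequency factors, which is \emph{not} summable as a geometric series, so one cannot simply sum over all $\{N_l\}$ independently. The standard fix is to exploit the convolution constraint $n_0-n_1+\cdots-n_{2k+1}=0$: the largest two of the $N_l$ must be comparable, so there are only $O(\log N_{\max})$ choices for the second-largest once the largest is fixed; and for each fixed ordering of the frequencies one performs the sum over the non-extremal $N_l$ by Cauchy--Schwarz in those dyadic indices (paying only a $\log$, absorbed by the strictly positive room $s-s_c>0$ in the exponent of $N_{\max}$), and finally sums the resulting geometric series in $N_{\max}$. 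This is exactly the type of argument used to pass from a frequency-localized trilinear estimate to its non-localized $\ell^2$ version, and it goes through here because Lemma~\ref{lem:trilinear} supplies the crucial negative power $N_{\max}^{-2s}$ with $s$ strictly above scaling.
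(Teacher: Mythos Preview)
Your overall strategy---duality to introduce $n_0$, dyadic decomposition, Lemma~\ref{lem:trilinear} on each piece, then summation in the $N_l$---is exactly the paper's. The gap is in the summation. If you apply Lemma~\ref{lem:trilinear} at exponent $s$ (as you write), then after the weight conversion $N_0^sN_q^s/N_{\max}^{2s}\le N_0^{-s'}N_q^{s'}$ (which is the correct elementary inequality, valid for $|s'|\le s$) the net factor in front of $\tnorm{P_{N_0}\om_0}{\ell^2_{-s'}}\tnorm{P_{N_q}\om_q}{\ell^2_{s'}}\prod_{l\neq 0,q}\tnorm{P_{N_l}\om_l}{\ell^2_s}$ is exactly $\le 1$, with \emph{no} residual $N_{\max}^{-\de}$ and no residual $N_l^{-\de}$ on the small frequencies. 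Your claimed ``strictly negative power $N_{\max}^{-\de}$ \dots\ using $s>s_c$'' is therefore not actually produced by the computation you describe, and there is then nothing to absorb the $(\log N_{\max})^{O(k)}$ losses coming from Cauchy--Schwarz over the $2k$ non-extremal dyadic indices.

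The clean fix, which is what the paper does, is to invoke Lemma~\ref{lem:trilinear} at exponent $s-\e$ for any $0<\e<s-s_c$. This yields
\eqq{\sum_{(*)}\prod_{l=0}^{2k+1}\big|P_{N_l}\om_l(n_l)\big|~\lec~\Big(\frac{N_0\cdots N_{2k+1}}{N_{\max}^2}\Big)^{s-\e}\prod_{l=0}^{2k+1}\tnorm{P_{N_l}\om_l}{\ell^2},}
and after the \emph{same} weight conversion one is left with the extra factor $\big(N_0\cdots N_{2k+1}/N_{\max}^2\big)^{-\e}$. Since $N_{\max}\sim N_{\mathrm{second}}$, this gives an honest $N_l^{-\e}$ on each of the $2k$ smaller frequencies (summable geometrically), while the two largest are handled by Cauchy--Schwarz together with the $\ell^2$-orthogonality $\sum_N\tnorm{P_N\om}{\ell^2_\sigma}^2=\tnorm{\om}{\ell^2_\sigma}^2$. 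No logarithms appear. In short: the room $s>s_c$ is spent \emph{inside} the lemma, not afterwards.
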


\begin{proof}
We treat the case $q=1$; the same argument is applied to the other cases.
By duality, it suffices to show
\eqq{\bigg| \sum _{\mat{n_0,n_1,n_2,\dots ,n_{2k+1}\in \Bo{Z}^d\\ n_0-n_1+\cdots -n_{2k+1}=0\\  |n_0|^2-|n_1|^2+\cdots -|n_{2k+1}|^2=\mu}}\prod _{l=0}^{2k+1}\om _l(n_l)\bigg| \le C\tnorm{\om _0}{\ell ^2_{-s'}}\tnorm{\om _1}{\ell ^2_{s'}}\prod _{l=2}^{2k+1}\tnorm{\om _l}{\ell ^2_s}.}
Choose $\e >0$ so that $s-\e >s_c$.
By Lemma~\ref{lem:trilinear}, the left-hand side is bounded by
\eqq{
&\sum _{N_0,\dots ,N_{2k+1}\in \dyadic}\sum _{\mat{n_0,n_1,n_2,\dots ,n_{2k+1}\in \Bo{Z}^d\\ n_0-n_1+\cdots -n_{2k+1}=0\\  |n_0|^2-|n_1|^2+\cdots -|n_{2k+1}|^2=\mu}}\prod _{l=0}^{2k+1}\big| P_{N_l}\om _l(n_l)\big| \\
&\lec \sum _{N_0,\dots ,N_{2k+1}}\Big( \frac{N_0N_1\cdots N_{2k+1}}{N_{\max}^2}\Big) ^{s-\e}\prod _{l=0}^{2k+1}\norm{P_{N_l}\om _l}{\ell ^2}\\
&\lec \sum _{N_0,\dots ,N_{2k+1}}\Big( \frac{N_0N_1\cdots N_{2k+1}}{N_{\max}^2}\Big) ^{-\e}\norm{P_{N_0}\om _0}{\ell ^2_{-s'}}\norm{P_{N_1}\om _1}{\ell ^2_{s'}}\prod _{l=2}^{2k+1}\norm{P_{N_l}\om _l}{\ell ^2_s},
}
where $P_N\om (n):=\chf{\{ N\le \LR{n}<2N\}}\om (n)$, and at the last inequality we have used the fact that $N_0^sN_1^s/N_{\max}^{2s}\le N_0^{-s'}N_1^{s'}$ for any $-s\le s'\le s$.
The factor $(N_0N_1\cdots N_{2k+1}/N_{\max}^2)^{-\e}$ is enough for summing up over $N_0,N_1,\dots N_{2k+1}$:
For $N_{\max}$ and $N_{\mathrm{second}}\sim N_{\max}$ we can use Cauchy-Schwarz by orthogonality, and for the others there is a negative power of $N_l$. 
Therefore, the claim follows.
\end{proof}

We will also use the $\ell^\I$ estimate below:
\begin{lem}\label{lem:trilinear2}
Let $d,k\in \Bo{N}$ be such that $(d,k)\neq (1,1)$ and $d<2+\frac{2}{k}$.
Assume that $s\in \R$ satisfies 
\eqq{\begin{cases}
s>\tfrac{d}{2}-\tfrac{1}{2} &\text{if $k=1$ and $d=2,3$,}\\
\text{$s>s_c$ and $s>s_e-\e (k)$} &\text{if $k\ge 2$ and $d=1,2$,}
\end{cases}}
where $\e (k):=\frac{1}{2}\min \{ \frac{1}{k(2k+1)},\,\frac{3}{5}-\frac{9}{16},\,\frac{2k+3}{4k(2k+1)},\,\frac{3}{10}-\frac{1}{6}\} >0$.
Then, we have
\eqq{\sup _{\mu \in \Bo{Z}}~\norm{\sum _{\mat{n_1,\dots ,n_{2k+1}\in \Bo{Z}^d\\ n=n_1-n_2+\dots +n_{2k+1}}}\chf{\Sc{A}^c\cap \{ \Phi =\mu \}}\prod _{l=1}^{2k+1}\om _l(n_l)}{\ell ^\I (\Bo{Z}^d_n)}\lec \norm{\om _q}{\ell ^\I}\prod _{\mat{l=1\\ l\neq q}}^{2k+1}\norm{\om _l}{\ell ^2_s}}
for any $1\le q\le 2k+1$.
\end{lem}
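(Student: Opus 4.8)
The goal is an $\ell^\infty$ bound rather than an $\ell^2$ bound, so the main difference from Corollary~\ref{cor:trilinear} is that for the fixed output frequency $n$ we may no longer sum freely over $n$; instead, for each fixed $n$ we must estimate the $(2k+1)$-linear sum over the remaining frequencies by Cauchy--Schwarz and a counting bound. I would start as in the proof of Lemma~\ref{lem:trilinear}: freeze $\mu$ and $n$, dyadically decompose $\om_l = \sum_{N_l} P_{N_l}\om_l$, and reduce to estimating, for each choice of dyadic $N_1,\dots,N_{2k+1}$, the quantity $\sum \chf{\Sc{A}^c\cap\{\Phi=\mu\}}\prod_l |P_{N_l}\om_l(n_l)|$ with $n$ fixed. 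Order the $N_l$ so that $N_{[1]}\ge N_{[2]}\ge\cdots$ are the sorted frequency sizes. The removal of $\Sc{A}$ is exactly what forces $n_{[1]}\ne n_{[2]}$, $n_{[2]}\ne n_{[3]}$ (and the extra condition in $\Sc{A}_3$ when $k=2$, $d=2$), which is what makes the improved counting bounds \eqref{est-1'-1}, \eqref{est-1'-2}, \eqref{est-d'-1}, \eqref{est-d'-2} applicable rather than the trivial ones — this is the whole point of separating $\Sc{A}$.

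The core step: pick a two- or three-element index set $S_1$ containing the largest frequencies together with, when convenient, the (small) output $n$, apply Cauchy--Schwarz in $S_1$ to pull out $\big(\sum_{(n_i)_{i\in S_1}} \prod \om_i^2\big)^{1/2}$ times the square root of a counting factor $\#\{(n_i)_{i\in S_1}: \Phi=\mu,\ n\ \text{fixed}\}$, and handle the complementary indices by the trivial $\ell^2\hookrightarrow\ell^1$ bound $\sum_{n_j}\om_j(n_j)\le N_j^{d/2}\tnorm{\om_j}{\ell^2}$. Because $n$ is fixed (not summed), the relevant count is of lattice points $(n_1,\dots)$ subject to the linear relation $n=n_1-n_2+\cdots$ and $\Phi=\mu$ with at most two free $d$-vectors, i.e. exactly the setting of \eqref{est-d-}, \eqref{est-d'-1}, \eqref{est-d'-2} (for $d\ge2$) and \eqref{est-1'-1}, \eqref{est-1'-2} (for $d=1$). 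One then has to choose $S_1$, and when $N_{[1]}\sim N_{[2]}$ use almost-orthogonality to restrict the two top frequencies to a ball of radius $N_{[3]}$, so that the bound for each dyadic block gains a factor like $\big(N_{[1]}^{-1}\prod_l N_l\big)^{\theta}$ with $\theta$ strictly below the threshold exponent, allowing the dyadic sum to converge and leaving $\prod_l\tnorm{P_{N_l}\om_l}{\ell^2}$, which sums (via Cauchy--Schwarz on the top two, negative powers on the rest) to $\prod_l\tnorm{\om_l}{\ell^2_s}$ — with the $\ell^\infty$ norm on $\om_q$ absorbed by noting $\tnorm{P_{N_q}\om_q}{\ell^2}\le N_q^{d/2}\tnorm{\om_q}{\ell^\infty}$ and checking the arithmetic still closes.

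The main obstacle is the case analysis and the bookkeeping of exponents: because \eqref{est-d-} is \emph{not} better than trivial (it costs $R^{d-1}$, not $R^{d-2+}$), in the regimes where we are forced to use it — precisely $k=1$, $d=2,3$, which is why the hypothesis there is the weaker $s>\frac d2-\frac12$ rather than $s>s_c$ — one loses almost a full power of the top frequency, and one must verify that the surviving exponent is still $<2s$ (equivalently, beats $N_{[1]}^{2s}$) under the stated lower bound on $s$. For $k\ge2$, $d=1,2$ the available improved counts give a small room $\e(k)$ below $s_e$, and one has to track which of the four competing quantities in the definition of $\e(k)$ is binding in each subcase (notably the $\Sc{A}_3$ subcase when $k=2$, $d=2$, where the constraint $\LR{n_{[2]}}\le\LR{n_{[3]}}^{3/2}$ is exactly what is needed to control the fourth frequency); in the subcase $n_{[3]}=n_{[4]}$ lying outside $\Sc{A}_3$ one instead has $\LR{n_{[2]}}>\LR{n_{[3]}}^{3/2}$, i.e. a large gap, and almost-orthogonality plus the trivial bound suffices. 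Once all regimes are checked the estimate follows, and the uniformity in $\mu$ and in the fixed $n$ is automatic since every bound used is uniform in $\mu^*,n^*,n_*$.
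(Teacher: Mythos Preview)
Your plan has a genuine gap in how $\om_q$ is handled. You propose to dyadically localize all inputs including $\om_q$, run a Cauchy--Schwarz/counting argument that places every factor in $\ell^2$, and only at the end convert via $\tnorm{P_{N_q}\om_q}{\ell^2}\le N_q^{d/2}\tnorm{\om_q}{\ell^\infty}$. This loses a factor $N_q^{d/2}$ that cannot be absorbed when $n_q$ is the largest (or second largest) frequency: for instance, with $k=1$, $d=2$, $q=2$, take $n$ large, $\om_1=\om_3=\de_0$, $\om_2=\de_{-n}$, $\mu=2|n|^2$; the true sum equals $1$ uniformly in $n$, but your scheme produces a bound $\sim \LR{n}$. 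Relatedly, the ``almost orthogonality'' you invoke is an $\ell^2_n$ device and does not apply here, since $n$ is fixed and we take a supremum over it; the paper's proof of this lemma uses no orthogonality.

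The missing idea is that $\om_q$ must \emph{not} be localized: one pulls out $\tnorm{\om_q}{\ell^\infty}$ immediately, leaving a sum in which $n_q$ is still a free variable (determined by the others through the linear constraint) but carries no weight. The paper then applies a \emph{double} Cauchy--Schwarz using two index sets $T_1,T_2\subset\{1,\dots,2k+1\}$ that \emph{overlap exactly at $q$}, so that $n_q$ is counted in both $A'_\mu=\#\{(n_i)_{i\in T_1}:(*)\}$ and $B'_\mu=\#\{(n_j)_{j\in T_2}:(*)\}$ while only $\prod_{l\neq q}\tnorm{\om_l}{\ell^2}$ appears. The case analysis is then organized by the \emph{position of $q$} in the ordering $n_{[1]}\succeq\cdots\succeq n_{[2k+1]}$ (Cases 1--5 according as $n_q=n_{[1]},n_{[2]},\dots$), and $T_1,T_2$ are chosen accordingly so that the constraints from $\Sc{A}^c$ (namely $n_{[1]}\neq n_{[2]}$, $n_{[2]}\neq n_{[3]}$, and when $k=2$, $d=2$ additionally $\LR{n_{[2]}}>\LR{n_{[3]}}^{3/2}$ on the bad subset) feed directly into \eqref{est-d-}, \eqref{est-d'+}, \eqref{est-d'-1}, \eqref{est-d'-2} (or the $d=1$ analogues). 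Your single-CS scheme with one set $S_1$ and the trivial bound on its complement would, even with the correct handling of $\om_q$, lose an extra half-power on the smallest frequency in $S_1$ compared to the double scheme, and would not reach $s>\tfrac{d-1}{2}$ in the case $k=1$.
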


\begin{proof}
It suffices to prove for
\eqq{s_*=s_*(d,k):=\begin{cases}
\tfrac{d}{2}-\tfrac{1}{2} &\text{if $k=1$ and $d=2,3$,}\\
\max \shugo{s_c+,s_e-\e} &\text{if $k\ge 2$ and $d=1,2$}
\end{cases}
}
that
\eq{claim2-}{\norm{\sum _{\mat{n_1,\dots ,n_{2k+1}\in \Bo{Z}^d\\ n=n_1-n_2+\dots +n_{2k+1}}}\chf{\Sc{A}^c\cap \{ \Phi =\mu \}}\ti{\om}(n_q)\prod _{\mat{l=1\\ l\neq q}}^{2k+1}\om _l(n_l)}{\ell ^\I (\Bo{Z}^d_n)}\lec \norm{\ti{\om}}{\ell ^\I}\prod _{\mat{l=1\\ l\neq q}}^{2k+1}N_l^{s_*}\norm{\om _l}{\ell ^2}}
for any $\mu \in \Bo{Z}$, $1\le q\le 2k+1$, $(N_l)_{l=1}^{2k+1}\subset \dyadic$, and any non-negative functions $\shugo{\om _l}_{l=1}^{2k+1}\subset \ell ^2(\Bo{Z}^d)$ satisfying $\mathrm{supp}~\om _l\subset \{ n:N_l\le \LR{n}<2N_l\}$.
Note that $n_q$ is not restricted to a dyadic region.
In the following calculation we denote by $(*)$ the condition (for given $q$)
\eqs{n=n_1-n_2+\dots +n_{2k+1},\qquad \Phi =\mu ,\qquad (n_l)_{l=1}^{2k+1}\not\in \Sc{A},\\
N_l\le \LR{n_l}<2N_l\qquad (1\le l\le 2k+1,\,l\neq q).}

Given $q$, let $T_1,T_2$ be subsets of $\shugo{1,2,\dots ,2k+1}$ such that $\# T_1,\# T_2\ge 2$, $T_1\cap T_2=\shugo{q}$, and let $T_3:=\shugo{1,2,\dots ,2k+1}\setminus (T_1\cup T_2)$.
Note that $T_3$ may be empty.
By the Cauchy-Schwarz inequality, we have
\begin{align*}
&\norm{\sum _{\mat{n_1,\dots ,n_{2k+1}\\ (*)}}\ti{\om}(n_q)\prod _{\mat{l=1\\ l\neq q}}^{2k+1}\om _l(n_l)}{\ell ^\I}\\
&\le \tnorm{\ti{\om}}{\ell ^\I}\sup _{n}\sum _{\mat{n_m\\ m\in T_3}}\prod _{m\in T_3}\om _m(n_m)\sum _{\mat{n_j\\ j\in T_2\setminus \shugo{q}}}\prod _{j\in T_2\setminus \shugo{q}}\om _j(n_j)\sum _{\mat{n_i;\,i\in T_1\\ (*)}}\prod _{i\in T_1\setminus \shugo{q}}\om _i(n_i)\\
&\le \tnorm{\ti{\om}}{\ell ^\I}\Big( \prod _{m\in T_3}N_m^{d/2}\tnorm{\om _m}{\ell ^2}\Big) \sup _{\mat{n,n_m\\ m\in T_3}}\sum _{\mat{n_j\\ j\in T_2\setminus \shugo{q}}}\prod _{j\in T_2\setminus \shugo{q}}\om _j(n_j)\sum _{\mat{n_i;\,i\in T_1\\ (*)}}\prod _{i\in T_1\setminus \shugo{q}}\om _i(n_i)\\
&\le \tnorm{\ti{\om}}{\ell ^\I}\Big( \prod _{m\in T_3}N_m^{d/2}\tnorm{\om _m}{\ell ^2}\Big) \sup _{\mat{n,n_m\\ m\in T_3}}\Big( \prod _{j\in T_2\setminus \shugo{q}}\tnorm{\om _j}{\ell ^2}\Big) \Big( \sum _{\mat{n_j\\ j\in T_2\setminus \shugo{q}}}\Big( \sum _{\mat{n_i;\,i\in T_1\\ (*)}}\prod _{i\in T_1\setminus \shugo{q}}\om _i(n_i)\Big) ^2\Big) ^{1/2}\\
&\le \tnorm{\ti{\om}}{\ell ^\I}\Big( \prod _{m\in T_3}N_m^{d/2}\tnorm{\om _m}{\ell ^2}\Big) \Big( \prod _{j\in T_2\setminus \shugo{q}}\tnorm{\om _j}{\ell ^2}\Big) \sup _{\mat{n,n_m\\ m\in T_3}}\Big( \sum _{\mat{n_j\\ j\in T_2\setminus \shugo{q}}}A_\mu '\sum _{\mat{n_i;\,i\in T_1\\ (*)}}\prod _{i\in T_1\setminus \shugo{q}}\om _i(n_i)^2\Big) ^{1/2},
\intertext{where $A'_\mu =A'_\mu \big( n,(n_l)_{l\in T_2\cup T_3\setminus \{ q\}}\big) :=\# \Shugo{(n_i)_{i\in T_1}}{(*)}$,}
&\le \tnorm{\ti{\om}}{\ell ^\I}\Big( \prod _{m\in T_3}N_m^{d/2}\tnorm{\om _m}{\ell ^2}\Big) \Big( \hz\prod _{j\in T_2\setminus \shugo{q}}\hz \tnorm{\om _j}{\ell ^2}\Big) \sup _{\mat{n,n_m\\ m\in T_3}}\Big( \hz \sup _{\mat{n_j\\ j\in T_2\setminus \{ q\}}}\hz\hz A'_\mu \cdot \hz \sum _{\mat{n_i\\ i\in T_1\setminus \shugo{q}}}\sum _{\mat{n_j;\,j\in T_2\\ (*)}}\prod _{i\in T_1\setminus \shugo{q}}\hz \om _i(n_i)^2\Big) ^{1/2}\\
&\le \tnorm{\ti{\om}}{\ell ^\I}\Big( \prod _{m\in T_3}N_m^{d/2}\tnorm{\om _m}{\ell ^2}\Big) \Big( \hz\prod _{j\in T_2\setminus \shugo{q}}\hz \tnorm{\om _j}{\ell ^2}\Big) \Big( \hz\prod _{i\in T_1\setminus \shugo{q}}\hz \tnorm{\om _i}{\ell ^2}\Big) \sup _{\mat{n,n_m\\ m\in T_3}}\Big( \hz \sup _{\mat{n_j\\ j\in T_2\setminus \{ q\}}}\hz A'_\mu \cdot \hz \sup _{\mat{n_i\\ i\in T_1\setminus \{ q\}}}\hz B'_\mu \Big) ^{1/2},
\end{align*}
where $B'_\mu =B'_\mu \big( n,(n_l)_{l\in T_1\cup T_3\setminus \{ q\}}\big) :=\# \Shugo{(n_j)_{j\in T_2}}{(*)}$.
Hence, it suffices to show that
\eq{claim2}{\sup _{\mat{n,n_l\\ l\not\in T_1}}\# \Shugo{(n_i)_{i\in T_1}}{(*)} \cdot \sup _{\mat{n,n_l\\ l\not\in T_2}}\# \Shugo{(n_j)_{j\in T_2}}{(*)} \cdot \prod _{m\in T_3}N_m^d \lec \prod _{\mat{l=1\\ l\neq q}}^{2k+1}N_l^{2s_*}}
for any $q\in \shugo{1,2,\dots ,2k+1}$ with an appropriate choice of $T_1,T_2$.
Moreover, in the case $k\ge 2$ we will need to fix the order of size of $|n_1|,\dots ,|n_{2k+1}|$.
We see that 
\eqq{\text{LHS of \eqref{claim2-}} \le \sum _{\sigma \in \mathfrak{S}_{2k+1}}\norm{\sum _{\mat{n_1,\dots ,n_{2k+1}\\ (*)}}\chf{\{ n_{\sigma (1)}\succeq n_{\sigma (2)}\succeq \cdots \succeq n_{\sigma (2k+1)}\}}\ti{\om}(n_q)\prod _{\mat{l=1\\ l\neq q}}^{2k+1}\om _l(n_l)}{\ell ^\I (\Bo{Z}^d_n)},}
where $\mathfrak{S}_{2k+1}$ denotes the symmetric group of degree $2k+1$.
By the same argument as above, it also suffices to show that
\eq{claim2'}{&\sup _{\mat{n,n_l\\ l\not\in T_1}}\# \Shugo{(n_i)_{i\in T_1}}{(*),\,n_{\sigma (1)}\succeq \cdots \succeq n_{\sigma (2k+1)}} \\[-15pt]
&\qquad \times \sup _{\mat{n,n_l\\ l\not\in T_2}}\# \Shugo{(n_j)_{j\in T_2}}{(*),\,n_{\sigma (1)}\succeq \cdots \succeq n_{\sigma (2k+1)}} \cdot \prod _{m\in T_3}N_m^d \lec \prod _{\mat{l=1\\ l\neq q}}^{2k+1}N_l^{2s_*}}
for any $q\in \shugo{1,2,\dots ,2k+1}$ and $\sigma \in \mathfrak{S}_{2k+1}$, with suitable $T_1,T_2$.

\smallskip
\underline{(I) $k=1$ and $d=2,3$.}
The desired bound will be obtained from the estimates \eqref{est-d+} and \eqref{est-d-}.
We only consider the worst case $q=2$: Take $T_1=\shugo{1,2}$, $T_2=\shugo{2,3}$, then $T_3=\emptyset$ and two applications of \eqref{est-d-} yield
\eqq{\sup _{n,n_3}\# \Shugo{(n_1,n_2)\in (\Bo{Z}^d)^2}{(*)}\cdot \sup _{n,n_1}\# \Shugo{(n_2,n_3)\in (\Bo{Z}^d)^2}{(*)}\lec N_1^{d-1}N_3^{d-1},}
which verifies \eqref{claim2'}.

\smallskip
\underline{(II) $k\ge 2$ and $d=2$.}
We verify \eqref{claim2'} considering several cases separately according to the choice of $q,\sgm$.
Recall that $n_{[m]}$ is the $m$-th largest among $(n_l)_{l=1}^{2k+1}$ in the order $\preceq$.
For given $\sigma \in \mathfrak{S}_{2k+1}$, we denote by $[m]$ the index $l$ such that $n_{[m]}=n_{l}$; i.e., $[m]:=\sgm ^{-1}(m)$.

{\bf Case 1: $n_q=n_{[1]}$}.
We take $T_1=\shugo{q,[2],[3]}$ and $T_2=\shugo{q,[4],[5]}$. 
By the definition of the exceptional set $\Sc{A}$, we have $n_q\neq n_{[2]}, n_{[3]}, n_{[4]}, n_{[5]}$ and $n_{[2]}\neq n_{[3]}$.
Using one of the estimates \eqref{est-d'+}, \eqref{est-d'-1}, and \eqref{est-d'-2}, we obtain that
\eqq{\sup _{n,n_l;\,l\not\in T_1}\# \Shugo{(n_i)_{i\in T_1}}{(*)}\lec N_{[2]}^{1+}N_{[3]}.}
These estimates also imply
\eqq{\sup _{n,n_l;\,l\not\in T_2}\# \Shugo{(n_j)_{j\in T_2}}{(*)}\lec N_{[4]}^{1+}N_{[5]}}
if $n_{[4]}\neq n_{[5]}$, while we have
\eqq{\sup _{n,n_l;\,l\not\in T_2}\# \Shugo{(n_j)_{j\in T_2}}{(*)}\lec N_{[5]}^2\le N_{[4]}N_{[5]}}
under the additional assumption $n_{[4]}=n_{[5]}$.
Consequently, the left hand side of \eqref{claim2'} is bounded by
\eqq{N_{[2]}^{1+}N_{[3]}N_{[4]}^{1+}N_{[5]}\prod _{l=6}^{2k+1}N_{[l]}^2\le \prod _{l=2}^{2k+1}N_{[l]}^{2s_c+},}
which is favorable since $s_e=s_c+\frac{1}{k(2k+1)}>s_c$ in this case.

{\bf Case 2: $n_{[1]}\neq n_q=n_{[2]}$}.
We take $T_1=\shugo{[1],q,[3]}$, $T_2=\shugo{q,[4],[5]}$.
The same argument as Case 1 yields that
\eqq{\text{LHS of \eqref{claim2'}}&\lec N_{[1]}^{1+}N_{[3]}N_{[4]}^{1+}N_{[5]}\prod _{l=6}^{2k+1}N_{[l]}^2\lec \Big( N_{[1]}\prod _{l=3}^{2k+1}N_{[l]}\Big) ^{2s_c+}.}

{\bf Case 3: $n_{[2]}\neq n_q=n_{[3]}$}.
This is the only delicate case.
We take $T_1=\shugo{[1],[2],q}$ and $T_2=\shugo{q,[4],[5]}$, but now both $n_q=n_{[4]}$ and $n_{[4]}=n_{[5]}$ are possible to occur.
When $n_q=n_{[4]}$, noticing $|n_q|\le 2N_{[4]}$, the same argument as above implies that
\eqq{\text{LHS of \eqref{claim2'}}\lec N_{[2]}^{1+}N_{[4]}\cdot N_{[4]}^2\prod _{l=6}^{2k+1}N_{[l]}^2.}
If we further assume that $k\ge 3$, this is bounded by
\eqq{&N_{[2]}^{1+}N_{[4]}\cdot N_{[4]}^2\prod _{l=6}^{k+2}N_{[l]}^2\cdot \prod _{l=k+3}^{2k+1}N_{[l]}^2\le \Big( N_{[1]}N_{[2]}N_{[4]}\prod _{l=6}^{k+2}N_{[l]}\Big) ^{2s_c+} \Big( N_{[5]}\prod _{l=k+3}^{2k+1}N_{[l]}\Big) ^{2s_c},}
which is sufficient for the claim.
For $k=2$, however, the resulting bound is $(N_{[1]}N_{[2]}N_{[4]})^{\frac{4}{3}+}$, which is not acceptable by $(\frac{2}{3}+)>s_*=(\frac{3}{5}-)$.
We now make use of the additional property $\LR{n_{[2]}}>\LR{n_q}^{3/2}$ of $\Sc{A}^c$.
Since $N_{[4]}\le \LR{n_q}<(2N_{[2]})^{2/3}$, we have
\eqq{\text{LHS of \eqref{claim2'}}\lec N_{[2]}^{1+}N_{[4]}\cdot N_{[4]}^2\lec N_{[2]}^{\frac{9}{4}+}N_{[4]}^{\frac{9}{8}}\le  (N_{[1]}N_{[2]}N_{[4]})^{\frac{9}{8}+}.}
This is sufficient, because $\frac{9}{8}<\frac{6}{5}=2s_e$.
For the remaining cases (\mbox{i.e.}, $n_q\neq n_{[4]}=n_{[5]}$ or $n_q\neq n_{[4]}\neq n_{[5]}$), we treat just as Case 1 and obtain
\eqq{\text{LHS of \eqref{claim2'}}&\lec N_{[2]}^{2+}\cdot N_{[4]}^{1+}N_{[5]}\prod _{l=6}^{2k+1}N_{[l]}^2\lec \Big( N_{[1]}N_{[2]}\prod _{l=4}^{2k+1}N_{[l]}\Big) ^{2s_c+}.}

{\bf Case 4: $n_{[3]}\neq n_q=n_{[4]}$}.
We take $T_1=\shugo{[1],[2],q}$, $T_2=\shugo{[3],q,[5]}$.
In this case $n_q=n_{[5]}$ is possible.
The same argument as Case 1 with $|n_q|<2N_{[3]}$ implies
\eqq{\text{LHS of \eqref{claim2'}}&\lec N_{[2]}^{1+}N_{[3]}\cdot N_{[3]}^{1+}N_{[5]}\prod _{l=6}^{2k+1}N_{[l]}^2\lec \Big( N_{[1]}N_{[2]}N_{[3]}\prod _{l=5}^{2k+1}N_{[l]}\Big) ^{2s_c+},}
as desired.

{\bf Case 5: $n_q\neq n_{[1]},n_{[2]},n_{[3]},n_{[4]}$}.
We take $T_1=\shugo{[1],[2],q}$, $T_2=\shugo{[3],[4],q}$.
In this case $n_{[3]}=n_{[4]}$ is possible, and the same argument as Case 1 with $|n_q|<2N_{[4]}$ implies
\eqq{\text{LHS of \eqref{claim2'}}&\lec N_{[2]}^{1+}N_{[4]}\cdot N_{[4]}^{2+}\prod _{\mat{l=5\\ l\neq q}}^{2k+1}N_{[l]}^2\lec \Big( \prod _{\mat{l=1\\ l\neq q}}^{2k+1}N_{[l]}\Big) ^{2s_c+},}
which is sufficient.

\smallskip
\underline{(III) $k\ge 2$ and $d=1$.}
We follow the argument in (II) using \eqref{est-1'+}, \eqref{est-1'-1}, and \eqref{est-1'-2} instead of \eqref{est-d'+}, \eqref{est-d'-1}, and \eqref{est-d'-2}, respectively.

{\bf Case 1: $n_q=n_{[1]}$} ($T_1=\shugo{q,[2],[3]}$, $T_2=\shugo{q,[4],[5]}$).
Taking into account the case $n_{[4]}=n_{[5]}$, we have
\eqq{\text{LHS of \eqref{claim2'}}&\lec N_{[2]}^{0+}(N_{[4]}^{0+}+N_{[5]})\prod _{l=6}^{2k+1}N_{[l]}\le \prod _{l=2}^{2k+1}N_{[l]}^{1-\frac{3}{2k}+}.}
This is sufficient, since $2s_e -(1-\frac{3}{2k})=\frac{2k+3}{2k(2k+1)}>0$.

{\bf Case 2: $n_{[1]}\neq n_q=n_{[2]}$} ($T_1=\shugo{[1],q,[3]}$, $T_2=\shugo{q,[4],[5]}$).
In the same manner, we have 
\eqq{\text{LHS of \eqref{claim2'}}&\lec N_{[1]}^{0+}(N_{[4]}^{0+}+N_{[5]})\prod _{l=6}^{2k+1}N_{[l]}\le \Big( N_{[1]}\prod _{l=3}^{2k+1}N_{[l]}\Big) ^{1-\frac{3}{2k}+}.}

{\bf Case 3: $n_{[2]}\neq n_q=n_{[3]}$} ($T_1=\shugo{[1],[2],q}$, $T_2=\shugo{q,[4],[5]}$).
In contrast to the case of $d=2$, a crude estimate suffices for any $k\ge 2$.
We have
\eqq{\text{LHS of \eqref{claim2'}}&\lec N_{[2]}^{0+}(N_{[4]}^{0+}+N_{[4]})\prod _{l=6}^{2k+1}N_{[l]}\le \begin{cases}
(N_{[1]}N_{[2]}N_{[4]})^{\frac{1}{3}+}&\text{if $k=2$},\\[5pt]
\Big( N_{[1]}N_{[2]}\displaystyle\prod\limits _{l=4}^{2k+1}N_{[l]}\Big) ^{1-\frac{3}{2k}+}&\text{if $k\ge 3$},
\end{cases}}
which is sufficient since $\frac{1}{3}<\frac{3}{5}=2s_e$ if $k=2$.

{\bf Case 4: $n_{[3]}\neq n_q=n_{[4]}$} ($T_1=\shugo{[1],[2],q}$, $T_2=\shugo{[3],q,[5]}$).
We have
\eqq{\text{LHS of \eqref{claim2'}}&\lec N_{[2]}^{0+}(N_{[3]}^{0+}+N_{[5]})\prod _{l=6}^{2k+1}N_{[l]}\le \Big( N_{[1]}N_{[2]}N_{[3]}\prod _{l=5}^{2k+1}N_{[l]}\Big) ^{1-\frac{3}{2k}+}.}

{\bf Case 5: $n_q\neq n_{[1]},n_{[2]},n_{[3]},n_{[4]}$} ($T_1=\shugo{[1],[2],q}$, $T_2=\shugo{[3],[4],q}$).
We have
\eqq{\text{LHS of \eqref{claim2'}}&\lec N_{[2]}^{0+}(N_{[4]}^{0+}+N_{[4]})\prod _{\mat{l=5\\ l\neq q}}^{2k+1}N_{[l]}\lec \Big( \prod _{\mat{l=1\\ l\neq q}}^{2k+1}N_{[l]}\Big) ^{1-\frac{3}{2k}+}.}
This completes the proof.
\end{proof}

\begin{rem}\label{rem:linfty}
When $k=1$ and $d=2,3$, there is still some gap between the regularity threshold $\frac{d}{2}-\frac{1}{2}$ obtained in Lemma~\ref{lem:trilinear2} and the expected one $\max \{ s_c,s_e\} =s_e=\frac{d}{6}$.
In the case $d=2$, it turns out that the claimed estimate actually fails for $s<\frac{1}{2}$; this can be easily seen by testing with $\om _1=\chf{\{ (n^1,0):|n^1|\le N\}}$, $\om _2=\chf{\{ (n^1,n^2):\max (|n^1|,|n^2|)\le N\}}$, $\om _3=\chf{\{ (0,n^2):|n^2|\le N\}}$, $q=2$, $\mu =0$, $n=0$, and taking $N\to \I$.
The threshold might be improved by some further analysis in the case $d=3$.
\end{rem}

We are now ready to give a proof of Proposition~\ref{prop:fundamental}.
\begin{proof}[Proof of Proposition~\ref{prop:fundamental}]
\underline{Estimate (B1)}.
This is a special case ($s'=s$) of Corollary~\ref{cor:trilinear}.

\underline{Estimate (B1)'}.
This follows from the Sobolev inequality, since $s_2>d/2$.
 
\underline{Estimate (R)}. 
If $k=1$, then we have
\eqq{\bigg| \sum _{\mat{n_1,n_2,n_3\in \Bo{Z}^d\\ n=n_1-n_2+n_3}}\chf{\Sc{A}}\prod _{l=1}^{3}\om _l(n_l)\bigg| \le \Big[ \sum _{\mat{n=n_1-n_2+n_3\\ n_2=n_1}}+\sum _{\mat{n=n_1-n_2+n_3\\ n_2=n_3}}\Big] |\om _1(n_1)\om _2(n_2)\om _3(n_3)|.}
By the Cauchy-Schwarz inequality, for $s\ge 0$ we have
\eqq{\text{LHS of (R)}\le \tnorm{\om _1}{\ell ^2}\tnorm{\om _2}{\ell ^2}\tnorm{\om _3}{\ell ^2_s}+\tnorm{\om _1}{\ell ^2_s}\tnorm{\om _2}{\ell ^2}\tnorm{\om _3}{\ell^2}\le 2\prod _{l=1}^3\tnorm{\om _l}{\ell ^2_s}.}

For $k\ge 2$, we have
\eqq{\bigg| \sum _{\mat{n_1,n_2,\dots ,n_{2k+1}\in \Bo{Z}^d\\ n=n_1-n_2+\dots +n_{2k+1}}}\chf{\Sc{A}}\prod _{l=1}^{2k+1}\om _l(n_l)\bigg| \le \sum _{\sgm \in \mathfrak{S}_{2k+1}}\sum _{i=1}^3\sum _{\mat{(n_l)_{l=1}^{2k+1}\in \Sc{A}_i,\,n_{[l]}=n_{\sgm (l)}\\n=n_1-n_2+\dots +n_{2k+1}}}\prod _{l=1}^{2k+1}|\om _{[l]}(n_{[l]})|.}
Let us focus on the case $\sgm =\mathrm{id}$ for simplicity.
The terms for $i=1$ are treated with Young's inequality:
\eqq{&\norm{\LR{n}^s\sum _{n_1\in \Bo{Z}^d}|\om _1(n_1)\om _2(n_1)| \sum _{n=n_3-n_4+\dots +n_{2k+1}}\prod _{l=3}^{2k+1}|\om _l(n_l)|}{\ell ^2}\\
&\lec \norm{\sum _{n_1\in \Bo{Z}^d}\LR{n_1}^{s+\frac{d(k-1)}{2k}}|\om _1(n_1)\om _2(n_1)| \sum _{n=n_3-n_4+\dots +n_{2k+1}}\LR{n_3}^{\frac{d(k-1)}{2k}}|\om _3(n_3)|\prod _{l=4}^{2k+1}\LR{n_l}^{-\frac{d}{2k}}|\om _l(n_l)|}{\ell ^2}\\
&\le \tnorm{\om _1}{\ell ^2_s}\tnorm{\om _2}{\ell ^2_{\frac{d(k-1)}{2k}}}\tnorm{\om _3}{\ell ^2_{\frac{d(k-1)}{2k}}}\prod _{l=4}^{2k+1}\tnorm{\om _l}{\ell ^1_{-\frac{d}{2k}}}.}
This is sufficient, since $\frac{d(k-1)}{2k}=-\frac{d}{2k}+\frac{d}{2}<-\frac{d}{2k+1}+\frac{d}{2}=s_e$.
The case $i=2$ is treated in almost the same manner.
For $i=3$, we exploit the restriction $\LR{n_2}\le \LR{n_3}^{3/2}$ to obtain
\eqq{&\norm{\LR{n}^{s}\sum _{n_3\in \Bo{Z}^d}|\om _3(n_3)\om _4(n_3)|\sum _{n=n_1-n_2+n_5-\dots +n_{2k+1}}|\om _1(n_1)\om _2(n_2)|\prod _{l=5}^{2k+1}|\om _l(n_l)|}{\ell ^2}\\
&\lec \big\| \sum _{n_3\in \Bo{Z}^d}\LR{n_3}^{\frac{d(2k-3/2)}{2k+1/2}}|\om _3(n_3)\om _4(n_3)|\\[-15pt]
&\qquad\quad \times \sum _{n=n_1-n_2+n_5-\dots +n_{2k+1}}\LR{n_1}^{s}|\om _1(n_1)|\LR{n_2}^{-\frac{d}{2k+1/2}}|\om _2(n_2)|\prod _{l=5}^{2k+1}\LR{n_l}^{-\frac{d}{2k+1/2}}|\om _l(n_l)| \big\| _{\ell ^2}\\
&\le \tnorm{\om _1}{\ell ^2_s}\tnorm{\om _3}{\ell ^2_{\frac{d(2k-3/2)}{2(2k+1/2)}}}\tnorm{\om _4}{\ell ^2_{\frac{d(2k-3/2)}{2(2k+1/2)}}}\tnorm{\om _2}{\ell ^1_{-\frac{d}{2k+1/2}}}\prod _{l=5}^{2k+1}\tnorm{\om _l}{\ell ^1_{-\frac{d}{2k+1/2}}}.}
This is also sufficient, because $\frac{d(2k-3/2)}{2(2k+1/2)}=-\frac{d}{2k+1/2}+\frac{d}{2}<-\frac{d}{2k+1}+\frac{d}{2}=s_e$.

\underline{Estimates (B2), (B2)', (B3)}.
Consider the following three cases separately: (i) $d\ge 2+\frac{2}{k}$, (ii) $d=1,2$ and $k\ge 2$, (iii) $d=2,3$ and $k=1$.

For (i), we use Corollary~\ref{cor:trilinear} with $\sgm =-s_c$ to obtain (B2).
The estimate (B2)' is verified by the Sobolev inequalities
\eqq{\tnorm{fg}{H^{-s_c}}\lec \tnorm{f}{H^{-s_c}}\tnorm{g}{H^{s_2}},\qquad \tnorm{fg}{H^{s_2}}\lec \tnorm{f}{H^{s_2}}\tnorm{g}{H^{s_2}}.}
Note that $0\le s_c<\frac{d}{2}<s_2$.
For (B3), we use the Sobolev embeddings
\eqq{\tnorm{f}{H^{-s_c}}\lec \tnorm{f}{L^{\frac{dk}{dk-1}}},\qquad \tnorm{f}{L^{\frac{dk(2k+1)}{dk-1}}}\lec \tnorm{f}{H^{s_e+\frac{1}{k(2k+1)}}},}
and note that $s_c=s_e+\frac{1}{2k+1}(d-2-\frac{1}{k})\ge s_e+\frac{1}{k(2k+1)}$ if $d\ge 2+\frac{2}{k}$.

For (ii), Lemma~\ref{lem:trilinear2} gives (B2).
(B2)' follows from the inequality $\tnorm{\phi *\psi}{\ell ^\I}\lec \tnorm{\phi}{\ell ^\I}\norm{\psi}{\ell ^2_{s_2}}$ for any $s_2>d/2$.
Since $s\ge s_e$ and $\tnorm{f}{L^{2k+1}}\lec \tnorm{f}{H^{s_e}}$, (B3) holds.

For (iii), neither Corollary~\ref{cor:trilinear} nor Lemma~\ref{lem:trilinear2} is sufficient, so we interpolate these estimates to optimize the regularity range.
Let
\eqq{M(\om _1,\om _2,\om _3)(n)&:=\sum _{n=n_1-n_2+n_3}\chf{\{ n_1\neq n_2\neq n_3\}}\om _1(n_1)\om _2(n_2)\om _3(n_3),\\
M_\mu (\om _1,\om _2,\om _3)(n)&:=\sum _{n=n_1-n_2+n_3}\chf{\{ n_1\neq n_2\neq n_3\} \cap \{ \Phi =\mu \}}\om _1(n_1)\om _2(n_2)\om _3(n_3)\qquad (\mu \in \Bo{Z}).}
We have already seen in (i), (ii) that
\eqq{\norm{M(\om _1,\om _2,\om _3)}{\ell ^2_{-(\frac{d}{2}-1)}}\lec \prod _{l=1}^3\tnorm{\om _l}{\ell ^2_{s_e+\frac{1}{3}}},\qquad \norm{M(\om _1,\om _2,\om _3)}{\ell ^\I}\lec \prod _{l=1}^3\tnorm{\om _l}{\ell ^2_{s_e}},}
and also that
\eqq{\norm{M(\om _1,\om _2,\om _3)}{\ell ^2_{-(\frac{d}{2}-1)}}\lec \tnorm{\om _q}{\ell ^2_{-(\frac{d}{2}-1)}}\prod _{\mat{l=1\\ l\neq q}}^3\tnorm{\om _l}{\ell ^2_{s_2}},\quad \norm{M(\om _1,\om _2,\om _3)}{\ell ^\I}\lec \tnorm{\om _q}{\ell ^\I}\prod _{\mat{l=1\\ l\neq q}}^3\tnorm{\om _l}{\ell ^2_{s_2}},
}
while Corollary~\ref{cor:trilinear} and Lemma~\ref{lem:trilinear2} give
\eqq{
\norm{M_\mu (\om _1,\om _2,\om _3)}{\ell ^2_{-(\frac{d}{2}-1)}}&\lec \tnorm{\om _q}{\ell ^2_{-(\frac{d}{2}-1)}}\prod _{\mat{l=1\\ l\neq q}}^3\tnorm{\om _l}{\ell ^2_{\frac{d}{2}-1+}},\\
\norm{M_\mu (\om _1,\om _2,\om _3)}{\ell ^\I}&\lec \tnorm{\om _q}{\ell ^\I}\prod _{\mat{l=1\\ l\neq q}}^3\tnorm{\om _l}{\ell ^2_{\frac{d}{2}-\frac{1}{2}+}}\qquad (\mu \in \Bo{Z},~1\le q\le 3).}
Interpolating these estimates, we have
\eqs{\norm{M(\om _1,\om _2,\om _3)}{\ell ^{2/\th}_{-\th (\frac{d}{2}-1)}}\hz\lec \prod _{l=1}^3\tnorm{\om _l}{\ell ^2_{s_e+\frac{\th}{3}}},\qquad \norm{M(\om _1,\om _2,\om _3)}{\ell ^{2/\th}_{-\th (\frac{d}{2}-1)}}\hz\lec \tnorm{\om _q}{\ell ^{2/\th}_{-\th (\frac{d}{2}-1)}}\prod _{\mat{l=1\\ l\neq q}}^3\tnorm{\om _l}{\ell ^2_{s_2}},\\
\norm{M_\mu (\om _1,\om _2,\om _3)}{\ell ^{2/\th}_{-\th (\frac{d}{2}-1)}}\lec \tnorm{\om _q}{\ell ^{2/\th}_{-\th (\frac{d}{2}-1)}}\prod _{\mat{l=1\\ l\neq q}}^3\tnorm{\om _l}{\ell ^2_{\frac{d}{2}-\frac{1+\th}{2}+}}}
for $\th \in [0,1]$.
To minimize the lower bound of regularity $\max \{ s_e+\frac{\th}{3},\frac{d}{2}-\frac{1+\th}{2}\}$, we choose $\th =\frac{2d-3}{5}$.
From the resulting estimates, we obtain (B3), (B2)', and (B2), respectively.
\end{proof}


\bigskip

\bigskip
\bigskip

\end{document}